\newtheorem{theorem}{Theorem}[section]
\newtheorem{lemma}[theorem]{Lemma}
\newtheorem{proposition}[theorem]{Proposition}
\newtheorem{corollary}[theorem]{Corollary}
\newtheorem{question}[theorem]{Question}
\theoremstyle{remark}
\newtheorem{rmks}{Remarks}[section]
\newtheorem*{Examples}{Examples}
\newcommand{\MLR}{\mathsf{MLR}}
\newcommand{\dom}{\text{dom}}
\newcommand{\jump}{\emptyset'}
\newcommand{\llb}{\llbracket}
\newcommand{\rrb}{\rrbracket}
\definecolor{purple}{rgb}{.9,0.2,.9}
\newcommand{\cs}{2^\omega}
\newcommand{\uh}{{\upharpoonright}}
\renewcommand{\phi}{\varphi}
\newcommand{\str}{2^{<\omega}}
\newcommand{\halts}{{\downarrow}}
\title{Continuous Randomness via Transformations of 2-Random Sequences}
\author{Christopher P.\ Porter}
\date{} 
\begin{document}

\begin{abstract}
Reimann and Slaman initiated the study of sequences that are Martin-L\"of random with respect to a continuous measure, establishing fundamental facts about NCR, the collection of sequences that are not Martin-L\"of random with respect to any continuous measure.  In the case of sequences that are random with respect to a computable, continuous measure, the picture is fairly well-understood:  such sequences are truth-table equivalent to a Martin-L\"of random sequence.  However, given a sequence that is random with respect to a continuous measure but not with respect to any computable measure, we can ask:  how close to effective is the measure with respect to which it is continuously random?  

In this study, we take up this question by examining various transformations of 2-random sequences (sequences that are Martin-L\"of random relative to the halting set $\emptyset'$) to establish several results on sequences that are continuously random with respect to a measure that is computable in  $\emptyset'$.  In particular, we show that (i) every noncomputable sequence that is computable from a 2-random sequence is Martin-L\"of  random with respect to a continuous, $\emptyset'$-computable measure and (ii) the Turing jump of every 2-random sequence is Martin-L\"of random with respect to a continuous, $\emptyset'$-computable measure.  From these results, we obtain examples of sequences that are not proper, i.e., not random with respect to any computable measure, but are random with respect to a continuous, $\emptyset'$-computable measure.  Lastly, we consider the behavior of 2-randomness under a wider class of effective operators (c.e.\ operators, pseudojump operators, and operators defined in terms of pseudojump inversion), showing that these too yield sequences that are Martin-L\"of random with respect to a continuous, $\emptyset'$-computable measure.
\end{abstract}

\maketitle

\section{Introduction}\label{sec-intro}

The study of algorithmically random sequences with respect to noncomputable measures was initiated by Levin in \cite{Lev76} and was significantly advanced by Reimann and Slaman \cite{ReiSla15, ReiSla21} and Day and Miller \cite{DayMil13}.  Reimann and Slaman focused in particular on Martin-L\"of randomness with respect to continuous measures, showing in particular that  all but countably many sequences are Martin-L\"of random with respect to a continuous measure on Cantor space.  This contrasts significantly with the case of randomness with respect to a \emph{computable} continuous measure, as every sequence that is Martin-L\"of random with respect to such a measure is truth-table equivalent to an unbiased Martin-L\"of random sequence (hereafter, ``random" will be short for ``Martin-L\"of random") .

The present study is motivated by the question:  given a sequence that is random with respect to a continuous measure, how close to effective is the continuous measure with respect to which it is random?  Clearly being random with respect to a computable, continuous measure is the best we can hope for.  But what if our sequence is not \emph{proper}, that is, not random with respect to a computable measure?  The aim of this brief study is to move one level up in the arithmetical hierarchy to find such sequences that are random with respect to a  measure that  is computable in the halting set $\emptyset'$.

We will focus on two general approaches to generating nonproper sequences that are random with respect to a $\emptyset'$-computable measure.  First, in Section \ref{sec-2ran} we establish a  useful connection between Turing reductions from 2-random sequences and randomness with respect to a $\emptyset'$-computable measure. In particular, we will prove that every noncomputable sequence that can be computed by a 2-random sequence is random with respect to a continuous, $\emptyset'$-computable measure, which improves a result due to Reimann and Slaman \cite{ReiSla21} (who showed that every noncomputable sequence computable from a 3-random sequence is random with respect to a continuous measure).  From this result, we will be able to obtain examples of nonproper sequences that are random with respect to a continuous, $\emptyset'$-computable measure.

In Section \ref{sec-operators}, we will then consider the behavior of 2-random sequences under a broader class of effective operators beyond Turing functionals, including the Turing jump and pseudojump operators.  In particular, we will show that the jump of a 2-random sequence is a nonproper sequence that is random with respect to a continuous, $\emptyset'$-computable measure.

For related work, see Hirschfeldt and Terwijn \cite{HirTer08} on $\Delta^0_2$ measures. Demuth also studied $\emptyset'$-computable measures in \cite{Dem88a} and \cite{Dem88b}.  Cenzer and Porter \cite{CenPor18} also studied  several notions of randomness for members of $\Pi^0_1$ classes that are given in terms of certain $\emptyset'$-computable measures.  See also \cite{LiRei19} for recent work on sequences that are not random with respect to any continuous measure.

Before turning to our main results, we briefly review a few concepts and fix our notation. In what follows, $\lambda$ stands for the Lebesgue measure. For binary strings $\sigma,\tau\in\str$, we use the notation $\sigma\preceq\tau$ to indicate that $\sigma$ is an initial segment of $\tau$.  We similarly define $\sigma\prec X$ for $\sigma\in\str$ and $X\in\cs$.  Moreover, we write the concatenation of $\sigma$ and $\tau$ as $\sigma^\frown\tau$.  We write the empty string as $\epsilon$.  Given $\sigma\in\str$, $\llb\sigma\rrb=\{X\in\cs:\sigma\prec X\}$ is the \emph{cylinder set} determined by $\sigma$.

A map $\Phi:\subseteq \cs \to \cs$ is a Turing functional if there is an oracle Turing machine that when given
$X \in \dom(\Phi)$ as an oracle computes the characteristic function of some $Y\in\cs$; in this case, we write $\Phi(X){\downarrow}=Y$.  For $k\in\omega$, we will write $\Phi(X;k)$ to be the output the computation on input $k$.  Of course, there may be some $k$ such that $\Phi(X;k)$ is undefined; in this case, we will consider $\Phi(X)$ to be undefined.  We can define the domain of $\Phi$ to be $\mathrm{dom}(\Phi)=\{X\in\cs\colon \Phi(X){\downarrow}\}$. We can also relativize any such  functional to some $Z\in\cs$ to obtain a $Z$-computable functional.

Recall that a measure on $\cs$ is determined by the values it assigns to the cylinder sets.  A measure $\mu$ on $\cs$ is computable if the value $\mu(\llb\sigma\rrb)$ is a
computable real number uniformly in $\sigma\in 2^{<\omega}$\!.  Similarly, for $Z\in\cs$ we can define a $Z$-computable measure $\mu$ by using $Z$ as an oracle to compute the values $\mu(\llb\sigma\rrb)$ for $\sigma\in\str$.

If $\mu$ is a $Z$-computable measure on $\cs$ and
$\Phi\colon \subseteq \cs \to \cs$ is a $Z$-computable  functional defined on
a set of $\mu$-measure one, then the \emph{pushforward measure}
$\mu_\Phi$ defined by setting
\begin{displaymath}
  \mu_\Phi(\llb\sigma\rrb)=\mu(\Phi^{-1}(\llb\sigma\rrb))
\end{displaymath}
for each $\sigma\in \str$ is a $Z$-computable measure.

 Recall that for a fixed computable measure $\mu$ on $\cs$\! and $Z\in \cs$\!, a 
 \emph{$\mu$-Martin-L\"of test relative to $Z$} (or simply a \emph{$\mu$-test relative to $Z$}) is a uniformly
    $\Sigma^{0}_{1}[Z]$ sequence $(U_{i})_{i\in\omega}$ of subsets of
    $\cs$ with $\mu (U_{n}) \leq 2^{-i}$\! for every $i\in\omega$. $X\in \cs$ passes such a test $(U_{i})_{i\in\omega}$
    if $X\notin \bigcap_{i\in\omega} U_{i}$ and $X$ is $\mu$-Martin-L\"of random relative to
    $Z$ if $X$ passes every $\mu$-Martin-L\"of test relative to
    $Z$. The set of all such sequences $X$ is denoted by $\MLR_{\mu}^{Z}$.  For each choice of $\mu$ and $Z$ as above, there is a single, \emph{universal}, $\mu$-test relative to $Z$,
$(U_{i}^Z)_{i\in \omega}$ such that $X \in \MLR_\mu^Z$ if and only if $X$ passes $(U_{i}^Z)_{i\in \omega}$.   Lastly, we say that a sequence is \emph{proper} if it is Martin-L\"of random with respect to a computable measure.

In the case that $\mu$ is a noncomputable measure, we have to be careful in defining randomness with respect to $\mu$.  In particular, we must relativize our tests to some sequence $R\in\cs$ that encodes our measure $\mu$; such a sequence is a called a \emph{representation} of $\mu$.  Specific details about representations of measures can be found, for instance in \cite{DayMil13} or \cite{ReiSla21}.  For our purposes, we do not need the full machinery of the representation of measures.  Whereas in the general approach to randomness with respect to a noncomputable measure $\mu$, a sequence is $\mu$-Martin-L\"of random if it is Martin-L\"of random with respect to \emph{some} representation $R_\mu\in\cs$ of $\mu$,  in the present study, we only need show that a given sequence is random with respect to a $\emptyset'$-computable measure, so it suffices to consider our tests relative to the oracle $\emptyset'$.

An \emph{atom} of a measure $\mu$ is a sequence $A\in\cs$ such that $\mu(\{A\})>0$.  A measure is \emph{continuous} if it has no atoms; otherwise it is \emph{atomic}.  A routine relativization of a result due to Kautz \cite{Kau91} yields the following.
\begin{lemma}\label{lem-atom}
For $Z\in\cs$, $A\in\cs$ is an atom of some $Z$-computable measure if and only if $A\leq_TZ$.
\end{lemma}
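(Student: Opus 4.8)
The plan is to prove the two implications separately, the reverse direction being a short direct construction and the forward direction resting on a $\Pi^0_1$-class argument. For the reverse direction, suppose $A\leq_TZ$. I would exhibit the Dirac measure $\delta_A$ concentrated on $A$, defined by $\delta_A(\llb\sigma\rrb)=1$ if $\sigma\prec A$ and $\delta_A(\llb\sigma\rrb)=0$ otherwise. Since $A\leq_TZ$, one can decide from $Z$ whether a given $\sigma\in\str$ is an initial segment of $A$ (compute $A\uh|\sigma|$ and compare), so the value $\delta_A(\llb\sigma\rrb)$ is a $Z$-computable real uniformly in $\sigma$; thus $\delta_A$ is a $Z$-computable measure, and $A$ is plainly an atom of it since $\delta_A(\{A\})=1>0$.

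For the forward direction, suppose $A$ is an atom of a $Z$-computable measure $\mu$, so that $\mu(\{A\})>0$. Since the cylinders $\llb A\uh n\rrb$ decrease to $\{A\}$, continuity from above gives $\mu(\{A\})=\inf_n\mu(\llb A\uh n\rrb)$, so I may fix a rational $q$ with $0<q<\mu(\{A\})$ and conclude $\mu(\llb A\uh n\rrb)>q$ for every $n$. I then consider the class $P_q=\{X\in\cs:\mu(\llb X\uh n\rrb)\geq q\text{ for all }n\}$, which contains $A$. The point of using the nonstrict inequality is that $\mu(\llb\sigma\rrb)\geq q$ is a $\Pi^0_1(Z)$ condition (its negation $\mu(\llb\sigma\rrb)<q$ is $\Sigma^0_1(Z)$, as $\mu(\llb\sigma\rrb)$ is a $Z$-computable real), so $P_q$ is the set of paths through a $Z$-co-c.e.\ tree and hence a $\Pi^0_1(Z)$ class. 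A counting argument shows $P_q$ is finite: every $X\in P_q$ satisfies $\mu(\{X\})=\inf_n\mu(\llb X\uh n\rrb)\geq q$, and since the singletons $\{X\}$ are pairwise disjoint with total measure at most $1$, there are at most $\floor{1/q}$ such $X$.

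It then remains to show that each member of a finite $\Pi^0_1(Z)$ class is $Z$-computable. Every member $X$ of such a class is isolated, so there is $\rho\prec X$ with $X$ the unique path of the defining tree extending $\rho$. Given $X\uh n$ with $n\geq|\rho|$, exactly one of the two children $(X\uh n)^\frown 0$ and $(X\uh n)^\frown 1$ lies on $X$, while the subtree above the other child is well-founded, since any path through it would furnish a second path extending $\rho$. Searching level by level with oracle $Z$, the wrong child's extensions die out at some finite level while the correct child's persist; the persisting child reveals $X(n)$, and the search halts precisely because the wrong subtree is finite. Hence $X$, and in particular $A$, is $Z$-computable, so $A\leq_TZ$.

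The principal obstacle is this last step: one must present $P_q$ as a genuine $\Pi^0_1(Z)$ class even though its membership is governed by a $Z$-c.e.\ rather than a $Z$-decidable condition, and then invoke (or reprove in relativized form) the fact that isolated paths of a $\Pi^0_1(Z)$ class are computable from the oracle. The finiteness bound $\floor{1/q}$ is exactly what guarantees that every path of $P_q$ is isolated, which is what lets this computability fact apply to $A$.
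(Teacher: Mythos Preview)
Your proof is correct. Note, however, that the paper does not actually supply a proof of this lemma: it simply states that the result is ``a routine relativization of a result due to Kautz'' and moves on. Your argument is precisely the standard one underlying Kautz's theorem---the Dirac measure for the easy direction, and for the harder direction the finite $\Pi^0_1[Z]$ class $P_q=\{X:\mu(\llb X\uh n\rrb)\geq q\text{ for all }n\}$ together with the fact that isolated points of a $\Pi^0_1[Z]$ class are $Z$-computable---so there is nothing to contrast methodologically. You have filled in exactly the details the paper elides.
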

\noindent One can further show that if $A\leq_TZ$ and $A$ is Martin-L\"of random with respect to a $Z$-computable measure $\mu$, then $A$ must be an atom of $\mu$.

We  will make use of a pair of results concerning the interaction between Turing functionals and Martin-L\"of randomness:

\begin{theorem}\label{thm-pres}
  For $Z\in\cs$, let $\Phi\colon \cs \to \cs$ be a $Z$-computable functional and let $\mu$ be a $Z$-computable measure satisfying $\mu(\dom(\Phi))=1$.
  \begin{itemize}
    \item[(i)] (Randomness preservation \cite{ZvoLev70}) If $X \in \MLR_{\mu}^Z$ then
    $\Phi(X) \in \MLR_{\mu_\Phi}^Z$.
    \item[(ii)] (No
  randomness from nonrandomness \cite{She86}) If $Y \in \MLR_{\mu_\Phi}^Z$, then there is some
    $X\in \MLR_{\mu}^Z$ such that $\Phi(X)=Y$.
  \end{itemize}
  \label{thm:PoR-and-NReN}
\end{theorem}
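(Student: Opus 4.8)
The plan is to prove both parts by manipulating Martin-L\"of tests relative to $Z$, using throughout the pushforward identity $\mu_\Phi(V)=\mu(\Phi^{-1}(V))$ for measurable $V\subseteq\cs$. I would obtain (i) by pulling a test back through $\Phi$, and (ii) by the contrapositive, turning the hypothesis that the fiber $\Phi^{-1}(\{Y\})$ contains no $\mu$-random sequence into a $\mu_\Phi$-test capturing $Y$.

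For (i) I would argue contrapositively. Suppose $\Phi(X)\notin\MLR_{\mu_\Phi}^Z$, and fix a $\mu_\Phi$-test $(V_n)_{n\in\omega}$ relative to $Z$ with $\Phi(X)\in\bigcap_nV_n$; put $U_n=\Phi^{-1}(V_n)$. Three routine checks finish the argument: first, $U_n$ is uniformly $\Sigma^0_1[Z]$, since for a single string $\tau$ the preimage $\Phi^{-1}(\llb\tau\rrb)=\{X:\tau\prec\Phi(X)\}$ is the $Z$-c.e.\ union of those cylinders $\llb\sigma\rrb$ on which the machine, using only the oracle bits of $\sigma$, already outputs $\tau$ on inputs $0,\dots,|\tau|-1$, and $V_n$ is itself a $Z$-c.e.\ union of cylinders; second, $\mu(U_n)=\mu(\Phi^{-1}(V_n))=\mu_\Phi(V_n)\le2^{-n}$; and third, $X\in\dom(\Phi)$ together with $\Phi(X)\in\bigcap_nV_n$ gives $X\in\bigcap_nU_n$. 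Hence $(U_n)$ is a $\mu$-test relative to $Z$ capturing $X$, so $X\notin\MLR_\mu^Z$.

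For (ii) I would again argue contrapositively, building from the universal $\mu$-test $(U_n)$ relative to $Z$, whose intersection $\mathcal N=\bigcap_nU_n$ is the set of non-$\mu$-random sequences. The conceptual heart is a measure estimate: writing $S_n=\{Y:\Phi^{-1}(\{Y\})\subseteq U_n\}$, every $X$ with $\Phi(X)\in S_n$ lies in its own fiber and hence in $U_n$, so $\Phi^{-1}(S_n)\subseteq U_n$ and therefore $\mu_\Phi(S_n)=\mu(\Phi^{-1}(S_n))\le\mu(U_n)\le2^{-n}$. Since $\mathcal N\subseteq U_n$, the ``bad'' set $G=\{Y:\Phi^{-1}(\{Y\})\subseteq\mathcal N\}$ of those $Y$ with no $\mu$-random preimage is contained in $\bigcap_nS_n$. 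Were $(S_n)$ effectively open, this would already exhibit a $\mu_\Phi$-test capturing every $Y\in G$ and finish the proof.

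The hard part will be that $S_n$, defined by a universal quantifier over the fiber, is not manifestly $\Sigma^0_1[Z]$. I would repair this by replacing fiber-containment with a lower-semicomputable density condition, setting
\[
V_n=\bigcup\Big\{\llb\tau\rrb:\mu\big(\Phi^{-1}(\llb\tau\rrb)\cap U_{n+1}\big)>\tfrac{1}{2}\,\mu_\Phi(\llb\tau\rrb)\Big\},
\]
which is $\Sigma^0_1[Z]$ because $\mu(\Phi^{-1}(\llb\tau\rrb)\cap U_{n+1})$ is lower-semicomputable and $\mu_\Phi(\llb\tau\rrb)$ computable, both relative to $Z$. Covering $V_n$ by disjoint minimal cylinders and summing the defining inequalities yields $\tfrac12\mu_\Phi(V_n)\le\mu(U_{n+1})\le2^{-n-1}$, so $(V_n)$ is a genuine $\mu_\Phi$-test relative to $Z$. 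It then remains to show no $\mu_\Phi$-random $Y$ lies in $G$: if such a $Y$ did, then since its whole fiber lies in each $U_{n+1}$, the covered fraction $\mu(\Phi^{-1}(\llb\tau\rrb)\cap U_{n+1})/\mu_\Phi(\llb\tau\rrb)$ along prefixes $\tau\prec Y$ ought to converge to $1$ and eventually exceed $\tfrac12$, placing $Y$ in $\bigcap_nV_n$ and contradicting its randomness. I expect the main obstacle to be justifying this convergence: it is an effective martingale-convergence (L\'evy zero--one) phenomenon, and the key point making it available for every $\mu_\Phi$-random $Y$ is precisely that such $Y$ are density points for the relevant conditional measures.
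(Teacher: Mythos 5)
First, a point of reference: the paper does not prove this theorem at all --- it cites part (i) to Zvonkin--Levin and part (ii) to Shen --- so your attempt has to be measured against the standard proofs in the literature. Your argument for (i) is exactly that standard proof and is correct, modulo the routine caveat that $\Phi^{-1}(\llb\tau\rrb)$ equals your union of cylinders only up to a subset of $\cs\setminus\dom(\Phi)$, which is $\mu$-null since $\mu(\dom(\Phi))=1$; with that understanding your three checks all go through.

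Part (ii) has a genuine gap, and it is exactly the one you flagged yourself: the capture step. Your test $(V_n)$ is sound --- it is uniformly $\Sigma^0_1[Z]$, and the bound $\mu_\Phi(V_n)\le 2^{-n}$ via minimal cylinders is correct --- but the claim that every $\mu_\Phi$-random $Y$ with no $\mu$-random preimage eventually has density above $\tfrac12$ along its prefixes is left as an expectation, justified by appeal to an effective L\'evy zero--one law. That appeal cannot be made to work as stated: the ratio you need to control has a numerator that is only lower semicomputable in $Z$, whereas effective martingale-convergence theorems at Martin-L\"of random points require computability of the conditional expectations; and more tellingly, density-type statements at ML-random points are known to \emph{fail} in general (being a density-one point of every $\Pi^0_1$ class containing the point characterizes ``density randomness,'' which is strictly stronger than ML-randomness). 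So there is no theorem of this kind you can cite, and the randomness of $Y$ is in any case the wrong lever.

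What actually closes the gap is compactness, and it needs no assumption on $Y$ beyond the badness of its fiber. Since $\mu(\dom(\Phi))=1$, the set $\cs\setminus\dom(\Phi)$ is a $\mu$-null $\Sigma^0_2[Z]$ set, hence a countable union of null $\Pi^0_1[Z]$ classes, so every point outside $\dom(\Phi)$ is non-random and therefore lies in $U_{n+1}$; consequently $K=\cs\setminus U_{n+1}$ is a $\Pi^0_1[Z]$ class contained in $\dom(\Phi)$. Now suppose $Y$'s fiber contains no $\mu$-random. The compact sets $C_\tau=\{X\in K:\Phi^X\text{ produces no output bit incompatible with }\tau\}$, for $\tau\prec Y$, are decreasing, and their intersection is $K\cap\Phi^{-1}(\{Y\})=\emptyset$ (elements of $K$ lie in $\dom(\Phi)$, so compatibility with $Y$ means mapping to $Y$, and the fiber avoids $K$). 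By compactness some $C_\tau$ is already empty, whence $\mu(\Phi^{-1}(\llb\tau\rrb)\cap U_{n+1})=\mu_\Phi(\llb\tau\rrb)$, i.e.\ the density at $\tau$ is $1>\tfrac12$ and $Y\in V_n$. Note that once you have this compactness observation together with the fact that non-domain points are non-random, your density sets become unnecessary: Shen's original argument simply takes $S_n=\{Y':(\exists\tau\prec Y')\ \{X:\Phi^X\text{ compatible with }\tau\}\subseteq U_n\}$, which is $\Sigma^0_1[Z]$ by compactness, satisfies $\Phi^{-1}(S_n)\subseteq U_n$ (giving the measure bound), and captures every $Y$ whose fiber contains no random. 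Those two ingredients --- compactness of the compatibility classes and the non-randomness of points outside $\dom(\Phi)$ --- are precisely what your proposal is missing.
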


Our study is primarily concerned with 2-randomness, that is, Martin-L\"of randomness relative to the halting set $\jump$, but we will make use of the fact that every 2-random sequence is weakly 2-random.  Recall that a sequence $X\in\cs$ is weakly 2-random if for every $\Pi^0_2$ class $P$ with $\lambda(P)=0$, we have $X\notin P$. Two useful facts about every weakly 2-random sequence (and hence every 2-random sequence) that we will use are as follows.  First, if $X$ is weakly 2-random and $\Phi$ is a Turing functional with $X\in\dom(\Phi)$, then $\lambda(\dom(\Phi))>0$ (since the domain of a Turing functional is a $\Pi^0_2$ class).  Second, as shown by Downey, Nies, Weber, and Yu \cite{DowNieWeb06}, $X\in\cs$ is weakly 2-random if and only if $X$ is Martin-L\"of random and $X$ forms a minimal pair with $\emptyset'$ in the Turing degrees (that is, $X$ does not compute any noncomputable $\Delta^0_2$ sets).

For more background on algorithmic randomness, see \cite{Nie09}, \cite{DowHir10}, \cite{SheUspVer17}, or the surveys contained in \cite{FraPor20}.

\section{Computing from 2-random sequences}\label{sec-2ran}

As noted in the previous section, Reimann and Slaman \cite{ReiSla21} proved that every noncomputable sequence below a 3-random sequence (i.e., a sequence that is Martin-L\"of random with respect to $\emptyset''$) is random with respect to a continuous measure.
 We improve this result as follows.
 
 \begin{theorem}\label{thm-below2r}
Every noncomputable sequence Turing below a 2-random sequence is Martin-L\"of random with respect to a continuous, $\emptyset'$-computable measure.
 \end{theorem}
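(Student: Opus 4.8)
The plan is to realize $A$ as the $\Phi$-image of the $2$-random oracle, push forward an appropriately conditioned copy of Lebesgue measure to make $A$ random with respect to an $\emptyset'$-computable measure, and then deal with continuity. Fix a $2$-random $Z$ with $A\leq_T Z$ and a Turing functional $\Phi$ with $\Phi(Z)=A$. Since $Z$ is $2$-random it is weakly $2$-random, and as $Z\in\dom(\Phi)$, a $\Pi^0_2$ class, we get $\lambda(\dom(\Phi))>0$; write $D=\dom(\Phi)$ and $p=\lambda(D)>0$. The class $D$ is $\Pi^0_1$ relative to $\emptyset'$ and $p$ is an $\emptyset'$-computable real, so the conditional measure $\lambda_D(\cdot)=\lambda(\cdot\cap D)/p$ is an $\emptyset'$-computable probability measure with $\lambda_D(D)=1$. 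A routine argument shows $Z\in\MLR_{\lambda_D}^{\emptyset'}$: randomness is preserved under conditioning on an effectively closed set of positive measure, since one may cover $D$ from outside by $\emptyset'$-open sets whose measures decrease to $p$ and thereby convert any $\lambda_D$-test relative to $\emptyset'$ into a $\lambda$-test relative to $\emptyset'$ capturing $Z$.

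Next I would apply randomness preservation, Theorem~\ref{thm-pres}(i), relative to $\emptyset'$ to the functional $\Phi$, which is total on the $\lambda_D$-measure-one set $D$. This yields $A=\Phi(Z)\in\MLR_\mu^{\emptyset'}$, where $\mu=(\lambda_D)_\Phi$ is an $\emptyset'$-computable measure (the pushforward of an $\emptyset'$-computable measure under an $\emptyset'$-computable functional defined on a set of measure one). Moreover $A$ is \emph{not} an atom of $\mu$: because $Z$ is weakly $2$-random it forms a minimal pair with $\emptyset'$, so the noncomputable set $A\leq_T Z$ cannot be $\Delta^0_2$, i.e.\ $A\not\leq_T\emptyset'$, and Lemma~\ref{lem-atom} relative to $\emptyset'$ then forbids $A$ from being an atom of the $\emptyset'$-computable measure $\mu$. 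At this point $A$ is already random with respect to an $\emptyset'$-computable measure of which it is not an atom; since we need only exhibit tests relative to $\emptyset'$, it remains to upgrade $\mu$ to a \emph{continuous} $\emptyset'$-computable measure.

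The remaining step, and the main obstacle, is that $\mu$ need not be continuous. Although the fiber $\Phi^{-1}(\{A\})$ is $\lambda$-null, other fibers $\Phi^{-1}(\{B\})$ may carry positive measure, producing atoms of $\mu$ at sequences $B\neq A$; by Lemma~\ref{lem-atom} every such $B$ satisfies $B\leq_T\emptyset'$, so the atoms form a countable set of $\Delta^0_2$ sequences of $\emptyset'$-computable total weight at most $1$. The goal is a continuous $\emptyset'$-computable measure $\mu'$ with $A\in\MLR_{\mu'}^{\emptyset'}$, and the natural plan is to redistribute each point mass $\mu(\{B\})$ continuously; for example, one can pass $\mu$ through a binary symmetric channel with computable noise levels $\varepsilon_k$ satisfying $\sum_k\varepsilon_k=\infty$, which keeps the smoothed measure $\emptyset'$-computable and makes every singleton null, or one can replace each $\mu(\{B\})\,\delta_B$ by $\mu(\{B\})$ times Lebesgue measure conditioned to a short cylinder $\llb B\uh{m_B}\rrb$.

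I expect the delicate part to lie entirely in verifying that the smoothing preserves the randomness of $A$. A simple domination argument does not suffice, because atoms of $\mu$ that share long prefixes with $A$ can \emph{inflate} $\mu(\llb A\uh n\rrb)$ and thereby be precisely what protects $A$ from $\mu$-tests; a smoothing that merely deletes or diffuses that mass too widely could make $A$ nonrandom. The fix should be to keep each redistributed mass confined to a small neighborhood of its atom (choosing $m_B$ large relative to the point where $B$ diverges from $A$), so that $\mu'(\llb\sigma\rrb)$ remains comparable to $\mu(\llb\sigma\rrb)$ along $\sigma\prec A$; one must then show that this comparability along $A$, together with $A\in\MLR_\mu^{\emptyset'}$, yields $A\in\MLR_{\mu'}^{\emptyset'}$. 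Carrying this out while keeping the construction uniform enough to guarantee that $\mu'$ is genuinely $\emptyset'$-computable—in particular handling the location of the atoms, which is not obviously $\emptyset'$-computable—is where the bulk of the work will be.
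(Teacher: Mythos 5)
Your opening step contains the fatal gap. You assert that $D=\dom(\Phi)$ is ``$\Pi^0_1$ relative to $\emptyset'$'' and that $p=\lambda(D)$ is an $\emptyset'$-computable real; both claims are false in general. The domain of a Turing functional is a $\Pi^0_2$ class, $D=\bigcap_n S_n$ with $S_n=\{Z\in\cs:\Phi(Z;n)\halts\}$, and such a class need not be topologically closed, so it need not be a $\Pi^0_1[\emptyset']$ class at all. Its measure is the infimum of the uniformly left-c.e.\ (hence uniformly $\emptyset'$-computable) reals $\lambda\bigl(\bigcap_{i\leq n}S_i\bigr)$, i.e., a decreasing limit of $\emptyset'$-computable reals; this is only right-c.e.\ relative to $\emptyset'$, hence $\emptyset''$-computable, and in general not $\emptyset'$-computable. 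The same applies to each $\lambda(\llb\sigma\rrb\cap D)$, so your conditional measure $\lambda_D$, and therefore the pushforward $\mu=(\lambda_D)_\Phi$, live one jump too high --- the rest of the argument collapses at this point. Overcoming exactly this obstacle is the main technical content of the paper's proof: one defines $f\leq_T\emptyset'$ by letting $f(n)$ be the least stage $s$ with $\lambda(S_n\setminus S_{n,s})\leq 2^{-n}$, builds from this a Martin-L\"of test relative to $\emptyset'$, and uses the 2-randomness of $X$ (the oracle) to extract a constant $k$ with $X\in S_{n,f(n)+k}$ for all $n$; then $S=\bigcap_n S_{n,f(n)+k}$ is a genuine $\Pi^0_1[\emptyset']$ subclass of $\dom(\Phi)$ that still contains $X$ (this is the Kautz ``2-randoms are generalized low'' trick). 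Your conditioning-preserves-randomness argument is fine, but only after $D$ has been replaced by such a class.

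The second gap is that your continuity step is a plan rather than a proof, and you say so yourself: redistributing the point masses requires locating the atoms (not $\emptyset'$-computable, as you note) and proving the smoothing does not destroy the randomness of $A$, which you leave open. The paper never faces this problem, because it arranges for the pushforward to have no atoms at all, using two devices absent from your proposal. First, by a lemma of Barmpalias, Day, and Lewis-Pye (Lemma~\ref{lem-special}), since $X$ is 2-random and $A$ is noncomputable the reduction may be taken to be a \emph{special} functional, one whose range contains no computable sequence; by Sacks' theorem any positive-measure fiber lies over a computable sequence, so a special functional has no positive-measure fibers, which kills every potential atom arising from the good part of the domain. Second, instead of conditioning, the paper extends $\Phi$ to a \emph{total} $\emptyset'$-computable functional $\Psi$ via the Functional Extension Lemma (Lemma~\ref{lem-fe}), arranged so that off the $\Pi^0_1[\emptyset']$ class $P$ each $Y$ is mapped to a finite modification of $Y$; fibers off $P$ are then contained in countable sets of finite modifications of a single sequence, hence Lebesgue null, killing all remaining atoms. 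Totality of $\Psi$ also yields the $\emptyset'$-computability of $\lambda_\Psi$ essentially for free. Your correct observations --- positive measure of the domain from weak 2-randomness, and that $A$ itself cannot be an atom because $X$ forms a minimal pair with $\emptyset'$ --- do survive, but they are not a substitute for these two missing ingredients.
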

 
Note that we are not claiming that a noncomputable sequence below a 2-random sequence is 2-random with respect to a $\emptyset'$-computable measure.  In the terminology laid out, for instance, in \cite{ReiSla21}, 2-randomness with respect to a noncomputable measure requires that we consider the jump of a representation of our measure; in our case, we only need $\emptyset'$ as an oracle.
 
Our proof of Theorem \ref{thm-below2r} relies on a combination of several tools.  First, we will make use of a class of Turing functionals, first isolated by Barmpalias, Day, and Lewis-Pye \cite{BarDayLew14} in their study of the typical Turing degree, which are referred to as \emph{special} Turing functionals.  Here a Turing functional is special if its range does not include any computable sequences.  The key result we will use is the following (the result we draw upon in \cite{BarDayLew14} is slightly more general):
 
 
%
%

\begin{lemma}[Barmpalias, Day, Lewis-Pye \cite{BarDayLew14}]\label{lem-special}
 If $X$ is a 2-random sequence and $Y$ is a noncomputable sequence such that $\Phi(X)=Y$ for some Turing functional $\Phi$, then $\Psi(X)=Y$ for some special Turing functional $\Psi$.
\end{lemma}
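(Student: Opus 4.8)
The plan is to build $\Psi$ by copying $\Phi$ while \emph{delaying commitment} to its output bits, so as to diagonalize simultaneously against every computable sequence. Fix an effective enumeration $(C_e)_{e\in\omega}$ of the partial computable $\{0,1\}$-valued functions, so that the total ones list all computable sequences. On oracle $Z$, the functional $\Psi$ computes $\Phi(Z)$ and releases its $\ell$-th bit only once a computable safeguard $S_\ell(Z)$ has fired, where $S_\ell(Z)$ is meant to certify that the finite string $\Phi(Z)\uh\ell$ has ``branched away'' from every computable target of index at most $\ell$. Since the set of lengths at which $\Psi(Z)$ is defined is automatically an initial segment, $\Psi(Z)$ is total iff every safeguard eventually fires, in which case $\Psi(Z)=\Phi(Z)$. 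Specialness is the easy half: if $\Phi(Z)=C_{e_0}$ is computable, then $\Phi(Z)\uh\ell$ agrees with $C_{e_0}$ forever, so for $\ell\geq e_0$ the safeguard against index $e_0$ never fires, $\Psi(Z)$ is truncated, and $C_{e_0}\notin\ran(\Psi)$. Thus, for any reasonable form of $S_\ell$, the resulting $\Psi$ is special.

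The whole content is therefore to design $S_\ell$ so that, simultaneously, (a) it never fires on a $Z$ whose output is converging to a fixed $C_e$, and (b) it \emph{does} eventually fire at every $\ell$ along the oracle $X$, so that $\Psi(X)=\Phi(X)=Y$ is total. The naive rule ``fire once $\Phi(Z)$ has been seen to disagree with each $C_e$, $e\leq\ell$'' fails: a partial $\varphi_e$ that agrees with $Y$ wherever it converges never yields a disagreement, and a single such index blocks all bits beyond it. Nor can one appeal to the Kolmogorov complexity of the prefixes $Y\uh\ell$, since $Y$ is random only with respect to the pushforward $\nu=\lambda_\Phi$, not with respect to $\lambda$, so these prefixes may be highly compressible. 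The safeguard must instead be \emph{measure-guided}: commit to $\sigma=\Phi(Z)\uh\ell$ only once $\sigma$ is no longer an initial segment of any computable sequence that still carries appreciable $\nu$-mass.

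To make this precise I would pass to the pushforward $\nu=\lambda_\Phi$ of Lebesgue measure restricted to $\dom(\Phi)$ (which has positive measure, since $X$ is weakly $2$-random and lies in $\dom(\Phi)$) and invoke randomness preservation (Theorem~\ref{thm-pres}(i), applied relative to $\emptyset'$) to get $Y\in\MLR_{\nu}^{\emptyset'}$. The computable targets then split into two kinds. For a computable $C$ that is \emph{not} an atom of $\nu$, one has $\lambda(\Phi^{-1}(\llb C\uh\ell\rrb))\to 0$, so the relevant preimages can be absorbed into the components of a Martin-L\"of test; as $X$ is Martin-L\"of random it escapes this test, and the safeguards against all such $C$ fire along $X$. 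The main obstacle is the computable \emph{atoms} of $\nu$, i.e.\ computable $C$ with $\lambda(\Phi^{-1}(\{C\}))>0$: their preimages have positive measure, so they cannot be swallowed by any null test, and the first place at which $Y$ departs from such a $C$ is not computably — indeed not $\emptyset'$-computably — bounded in the index of $C$. This is exactly the point at which $2$-randomness, rather than mere $1$-randomness, is required.

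I would resolve the atomic case by working one level up. The atoms of the computable measure $\nu$, together with approximations to their masses, are recognizable relative to $\emptyset'$; using this one can assemble a $\emptyset'$-Martin-L\"of test whose $n$-th component collects precisely the oracles that remain stuck tracking some atom of mass at least $2^{-n}$ past the stage where the non-atomic mass has drained away. Since $X\in\MLR^{\emptyset'}_\lambda$ and $Y=\Phi(X)$ is not an atom of $\nu$ — indeed $Y\not\leq_T\emptyset'$, because $X$ is weakly $2$-random and hence forms a minimal pair with $\emptyset'$, while $Y\leq_T X$ is noncomputable — the sequence $X$ passes this $\emptyset'$-test, so the atomic safeguards also fire along $X$. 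With safeguards of both kinds firing, $\Psi(X)$ is total and equal to $Y$, while specialness holds by the first paragraph. The crux, and the step I expect to demand the most care, is the precise $\emptyset'$-level bookkeeping of the atoms inside the definition of $S_\ell$ and the verification that the resulting object is a genuine $\emptyset'$-test avoided by the $2$-random $X$.
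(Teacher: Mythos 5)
Note first that the paper does not prove this lemma at all: it is imported from Barmpalias--Day--Lewis-Pye \cite{BarDayLew14}, so your proposal can only be judged on its own merits, and as it stands it has genuine gaps. The decisive one is that the conclusion requires $\Psi$ to be a \emph{plain} Turing functional, i.e.\ a c.e.\ object, but your safeguards are never exhibited as $Z$-c.e.\ events. ``The mass has drained away'' and ``$\sigma$ is no longer an initial segment of any computable sequence that still carries appreciable $\nu$-mass'' are not events that occur at a finite stage: the quantities $\lambda(\Phi^{-1}(\llb\sigma\rrb))$ are only lower semicomputable, so ``mass $>q$'' is c.e.\ but ``mass has dropped below $q$'' is not, and ``is a prefix of a computable sequence'' is not stage-decidable either; your atomic case even consults $\emptyset'$ explicitly. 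If the definition of $\Psi$ (rather than merely its verification) uses $\emptyset'$, you obtain at best a $\emptyset'$-computable special functional, which is not the statement --- and plainness is exactly what the paper's application needs: in the proof of Theorem \ref{thm-below2r} the sets $S_n=\{Z\colon\Phi(Z;n)\halts\}$ attached to the special functional must be $\Sigma^0_1$ so that $f\leq_T\jump$ and $(W_n)_{n\in\omega}$ is a $\emptyset'$-test; a $\emptyset'$-computable special functional would push the whole argument up to $\emptyset''$ and 3-randomness, i.e.\ back to the Reimann--Slaman result the paper is improving.

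Second, specialness is not ``the easy half'' once you switch to measure-guided safeguards: it then fails for computable sequences with \emph{null} preimage. Take $\Phi=\mathrm{id}$, so the pushforward is $\lambda$ and has no atoms whatsoever: your measure-guided condition is vacuously satisfied on every oracle, every bit is released, $\Psi$ acts as the identity, and every computable sequence lies in $\ran(\Psi)$. The index-by-index disagreement safeguard that would strangle such outputs is precisely the one you discarded --- correctly, since a partial $\varphi_e$ agreeing with $Y$ would block totality on $X$. Designing c.e.\ safeguards that simultaneously (a) kill every oracle whose output is computable, whether an atom or a null-preimage computable sequence, and (b) provably all fire along the given 2-random $X$, is the actual content of the lemma, and it is missing. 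Two further concrete problems: $\lambda_\Phi$ is not a computable measure ($\dom(\Phi)$ is a $\Pi^0_2$ class of measure possibly less than $1$, and cylinder masses are only lower semicomputable), so Theorem \ref{thm-pres} cannot be invoked for it; repairing this by passing to a positive-measure $\Pi^0_1[\emptyset']$ subclass of the domain is the content of Theorem \ref{thm-below2r}, which \emph{uses} the present lemma, so importing that repair here would be circular. And your proposed $\emptyset'$-test is not a test: the oracles tracking some atom of mass at least $2^{-n}$ form a set of measure at least $2^{-n}$ (indeed up to the total mass of all such atoms), so its components cannot satisfy the measure bounds required of a Martin-L\"of test.
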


Next, we will use a classical result due to Sacks.

\begin{theorem}[Sacks \cite{Sac63}]
For $X\in\cs$ and a Turing functional $\Phi$,  if $\lambda(\Phi^{-1}(X))>0$, then $X$ is computable.
\end{theorem}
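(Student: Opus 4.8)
The plan is to compute $X$ by a measure-theoretic majority vote, exploiting the fact that the preimage $\Phi^{-1}(\{X\})$ concentrates on a single cylinder. Write $A=\{Z\in\cs:\Phi(Z)=X\}$ and let $p=\lambda(A)>0$. For each $k\in\omega$ and $b\in\{0,1\}$, set $A_{k,b}=\{Z\in\cs:\Phi(Z;k)\halts=b\}$. Each $A_{k,b}$ is effectively open uniformly in $k,b$, being the union of the cylinders $\llb\tau\rrb$ for which $\Phi$ halts on input $k$ with output $b$ using only oracle bits drawn from $\tau$; hence its measure is a left-c.e.\ real uniformly in $k,b$, and $A_{k,0}\cap A_{k,1}=\emptyset$ since $\Phi(Z;k)$ has at most one value. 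Since $\Phi(Z)=X$ forces $\Phi(Z;k)\halts=X(k)$, we have $A\subseteq A_{k,X(k)}$ for every $k$.

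First I would use the Lebesgue density theorem to locate a finite string $\sigma$ around which $A$ is dense. Because $\lambda(A)=p>0$, $\lambda$-almost every $Z\in A$ is a density point of $A$, so there are some $Z\in A$ and some $n$ for which $\sigma=Z\uh n$ satisfies $\lambda(A\cap\llb\sigma\rrb)>\tfrac12\lambda(\llb\sigma\rrb)$. Fix such a $\sigma$; it is a finite object and so may be supplied as hard-coded advice without affecting computability. From $A\subseteq A_{k,X(k)}$ we then obtain $\lambda(A_{k,X(k)}\cap\llb\sigma\rrb)>\tfrac12\lambda(\llb\sigma\rrb)$, while disjointness yields $\lambda(A_{k,1-X(k)}\cap\llb\sigma\rrb)\le\lambda(\llb\sigma\rrb)-\lambda(A_{k,X(k)}\cap\llb\sigma\rrb)<\tfrac12\lambda(\llb\sigma\rrb)$.

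The decision procedure for $X(k)$ is then as follows: using the dyadic rational threshold $t=2^{-|\sigma|-1}=\tfrac12\lambda(\llb\sigma\rrb)$, dovetail the enumerations of $A_{k,0}\cap\llb\sigma\rrb$ and $A_{k,1}\cap\llb\sigma\rrb$, maintaining lower approximations $s_0,s_1$ of their measures, and halt, outputting $b$, as soon as $s_b>t$. The incorrect bit has measure strictly below $t$, so its lower approximation never exceeds $t$; the correct bit has measure strictly above $t$, so its lower approximation eventually does. Thus the search halts and returns $X(k)$, and since this works uniformly in $k$, the sequence $X$ is computable.

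The step requiring the most care is the separation: guaranteeing that the two left-c.e.\ approximations can be distinguished by a \emph{fixed} rational threshold, so that the search provably halts with the correct value. This is precisely what the density-point choice of $\sigma$ secures, by pushing the measure of the correct bit above $\tfrac12\lambda(\llb\sigma\rrb)$ and thereby trapping the incorrect bit strictly below it.
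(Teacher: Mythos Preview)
Your proof is correct; this is the standard majority-vote argument for Sacks' theorem via the Lebesgue density theorem. Note, however, that the paper does not actually supply a proof of this statement: it is quoted as a classical result from \cite{Sac63} and used as a black box in the proof of Theorem~\ref{thm-below2r}. So there is nothing in the paper to compare your argument against. For what it is worth, your write-up matches the usual textbook treatment (as in, e.g., Downey--Hirschfeldt or Nies): localize to a cylinder $\llb\sigma\rrb$ where the preimage has relative density exceeding $\tfrac12$, and then decide $X(k)$ by waiting for the measure of one of the disjoint $\Sigma^0_1$ classes $A_{k,0}\cap\llb\sigma\rrb$, $A_{k,1}\cap\llb\sigma\rrb$ to cross the threshold $2^{-|\sigma|-1}$.
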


Lastly, we will use the following lemma.

\begin{lemma}[Functional Extension Lemma]\label{lem-fe}
For $Z\in\cs$, let $\Phi^Z$ be a $Z$-computable functional that is total on a $\Pi^0_1[Z]$ class $P$.  Then there is a total $Z$-computable functional $\Psi^Z$ that agrees with $\Phi^Z$ on $P$.  Moreover, we can define $\Psi^Z$ so that if $X\in\cs\setminus P$, $\Psi^Z(X)$ is a finite modification of $X$.
\end{lemma}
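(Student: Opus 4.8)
The plan is to prove the Functional Extension Lemma by explicitly modifying the given functional $\Phi^Z$ so that on inputs outside $P$ it still halts, while leaving its behavior on $P$ untouched. Since $P$ is a $\Pi^0_1[Z]$ class, its complement $\cs \setminus P$ is $\Sigma^0_1[Z]$, so there is a $Z$-c.e.\ set of strings $S$ such that $X \notin P$ if and only if some initial segment of $X$ lies in $S$. The key structural fact is that for any $X \notin P$ we will eventually (during a $Z$-effective search) witness that $X$ has escaped $P$, and at that moment we can take over the computation and simply copy the remaining bits of $X$ to the output.

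First I would set up the definition of $\Psi^Z$ as a dovetailed process. On oracle $X$ and input $k$, I would simultaneously run two searches: (a) the computation $\Phi^Z(X;k)$, and (b) a search for an initial segment $\sigma \prec X$ with $\sigma \in S$ (equivalently, a stage at which $X$ is seen to leave $P$). If (a) halts first, output $\Phi^Z(X;k)$; this guarantees agreement with $\Phi^Z$ on all of $P$, since for $X \in P$ search (b) never succeeds and $\Phi^Z(X;k)$ does halt by totality of $\Phi^Z$ on $P$. If (b) halts first, witnessed by some $\sigma \prec X$ of length $\ell$, then for this and all larger inputs I would define $\Psi^Z(X;j) = X(j)$ directly from the oracle, i.e.\ copy $X$ from that point onward. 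Some bookkeeping is needed to make the two halves consistent: once we have committed to the ``escaped'' branch, we must ensure the already-produced outputs on inputs $k < \ell$ are also filled in (e.g.\ by outputting $X(k)$ there as well, so that the final output on the escaped branch is a finite modification of $X$).

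The main obstacle I expect is the clash between the two behaviors on a single oracle $X \notin P$: it is possible that $\Phi^Z(X;k)$ halts for some inputs $k$ \emph{before} we detect that $X$ has left $P$, so we might output $\Phi^Z(X;k)$ on small inputs and then switch to copying $X$ on later inputs, producing an incoherent total function rather than a genuine finite modification of $X$. To handle this cleanly I would give search (b) priority in the following sense: I run (b) to completion first for each input, but since membership in $P$ is only co-c.e., I cannot decide it. The fix is to fix a single ``escape stage'' once and for all: I search for the least $\ell$ such that some $\sigma \prec X$ with $|\sigma| = \ell$ lies in $S$, and only the value of $\ell$ (not its per-input discovery) governs the switch. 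Concretely, on input $k$, $\Psi^Z$ dovetails the search for an escape witness with the computation $\Phi^Z(X;k)$; if an escape witness of length $\ell$ appears, then $\Psi^Z(X;j)$ outputs $X(j)$ for \emph{every} $j$ (overriding any partial $\Phi$ output). Since $P$ is $\Pi^0_1[Z]$, the escape witness, if it exists, is found uniformly in $Z$, so $\Psi^Z$ is well-defined and total: on $X \in P$ it equals $\Phi^Z(X)$, and on $X \notin P$ it equals $X$ itself (hence trivially a finite modification of $X$). This last observation—that copying $X$ outright already yields a finite modification—removes the coherence problem entirely, so the delicate bookkeeping between the branches is unnecessary; the only care required is to verify that the dovetailed procedure remains a legitimate $Z$-computable functional, which follows because both the search over $S$ and the simulation of $\Phi^Z$ are $Z$-effective.
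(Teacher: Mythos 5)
Your construction has a genuine gap, and the ``fix'' you propose does not repair it. First, note that the obstacle you identify is not actually the problem: for $X\notin P$, outputting finitely many $\Phi^Z$-values on small inputs and then copying $X$ from some point on is perfectly acceptable --- that \emph{is} a finite modification of $X$, and indeed the paper's own construction produces exactly such outputs. The real problem is different: in a pure per-input race, nothing prevents $\Phi^Z$ from winning the race on \emph{infinitely many} (even all) inputs. Concretely, let $\Phi^Z$ be the functional that ignores its oracle and instantly outputs $0$ on every input; it is total everywhere, in particular on $P$. For any $X\notin P$, the escape witness takes some fixed number $t_0>1$ of steps to find, while $\Phi^Z(X;k)$ halts in one step for every $k$, so your race outputs $000\cdots$ on all inputs --- not a finite modification of $X$ when $X$ has infinitely many $1$s. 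Your final claim that ``on $X\notin P$ it equals $X$ itself'' is therefore false for the race construction.

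Your proposed override --- ``if an escape witness ever appears, output $X(j)$ for every $j$, overriding any partial $\Phi$ output'' --- cannot be implemented as a Turing functional. On input $j$, once $\Phi^Z(X;j)$ halts you must commit to an output at some finite stage; you cannot keep waiting to see whether an escape witness will appear later, because when $X\in P$ that search never returns, and honoring the override literally would require deciding whether $X\in P$, which is only $\Pi^0_1[Z]$, not $Z$-decidable. So your $\Psi^Z$ is either not total (override honored) or fails the finite-modification property (override ignored, i.e.\ the race). The missing idea is to \emph{synchronize the escape-detection effort with the input index}: the paper fixes a nested clopen approximation $P=\bigcap_s P_s$ and only accepts a $\Phi^Z$-computation on input $n$ if it halts by some stage $s\geq n$ \emph{at which $X$ still lies in $P_s$}; otherwise it copies $X(n)$. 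Then for $X\notin P$ there is $m$ with $X\notin P_s$ for all $s\geq m$, so every input $n\geq m$ is forced into the copy branch, which yields the finite modification. In your framework the analogous repair is: on input $j$, run the escape search for at least $j$ steps \emph{before} trusting any halt of $\Phi^Z(X;j)$; with that change your argument goes through and is essentially the paper's proof.
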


\begin{proof}
Given $Z\in\cs$, $\Phi^Z$, and $P$ as above, we define $\Psi^Z$ in terms of a $Z$-computable approximation of $P$ given by clopen sets $(P_s)_{s\in\omega}$ (where $P_{s+1}\subseteq P_s$ for every $s\in\omega$ and $P=\bigcap_{s\in\omega}P_s$).  For $X\in\cs$, we set
\[
\Psi^Z(X;n)=
	\left\{
		\begin{array}{ll}
			\Phi^Z(X; n) & \mbox{if } (\exists s\geq n)( \Phi^Z_s(X\uh s; n)\halts\;\&\;X\in P_s) \\
			X(n) & \mbox{otherwise}
		\end{array}.
	\right.
\]
Note that if $X\in P$, since $\Phi^Z$ is total on $P$, such a stage $s$ exists.  In this case, $\Psi^Z(X; n)$ agrees with $\Phi^Z(X; n)$.  
Moreover, given $X\in\cs\setminus P$, there is some $m$ such that for all $s\geq m$, $X\notin P_s$ (and we can $Z$-computably detect when this occurs).  Thus we will have $\Psi^Z(X;n)=X(n)$ for all $n\geq m$, as desired.

\end{proof}

We now turn to the proof of Theorem \ref{thm-below2r}.  Here we draw on a technique used to show that every 2-random sequence $X$ is generalized low, i.e., $X\oplus \jump\equiv_T X'$ (a generalization of which is due to Kautz \cite{Kau91}; see also the proof of \cite[Lemma 4.4]{Sim07}).

\bigskip

\begin{proof}[Proof of Theorem \ref{thm-below2r}]

Given a noncomputable sequence $Y\leq_T X$ where $X$ is 2-random, let  $\Phi$ be a Turing functional that witnesses this reduction. By Lemma \ref{lem-special}, we can assume that $\Phi$ is a special Turing functional.  By our discussion in Section \ref{sec-intro}, since $X$ is weakly 2-random, we have $\lambda(\dom(\Phi))>0$.

Write $\dom(\Phi)=\bigcap_{n\in\omega}S_n$, where $S_n=\{Z\in\cs\colon \Phi(Z;n)\halts\}$.  We define a function $f:\omega\rightarrow\omega$ such that for each $n\in\omega$, $f(n)$ is the least stage $s$ such that $\lambda(S_n\setminus S_{n,s})\leq 2^{-n}$.  Clearly $f\leq_T\jump$.  Then we set $V_n=S_n\setminus S_{n,f(n)}$ for each $n\in\omega$, and we further set $W_n=\bigcup_{i>n}V_i$. Since $(W_n)_{n\in\omega}$ is uniformly $\Sigma^0_1[\jump]$ and $\lambda(W_n)\leq 2^{-n}$ for every $n\in\omega$, $(W_n)_{n\in\omega}$ is a Martin-L\"of test relative to $\jump$.

Since $X\in\dom(\Phi)$, $X\in S_n$ for every $n\in\omega$.  Moreover, since $X$ is 2-random, $X\notin W_j$ for some $j\in\omega$, which implies that $X\notin V_i$ for every $i>j$.  It follows that for every $i>j$, $X\notin S_i\setminus S_{i,f(i)}$, so that $X\in S_{i,f(i)}$ for all but finitely many $i\in\omega$.  Thus there is some $k\in\omega$ such that $X\in S_{i,f(i)+k}$ for all $i\in\omega$. Then $S=\bigcap_{i\in\omega}S_{i,f(i)+k}$ is a $\Pi^0_1[\jump]$ subclass of $\dom(\Phi)$.

%
%
%

Again using the fact that $X$ is 2-random,  for the universal $\emptyset'$-Martin-L\"of test $(U^{\emptyset'}_i)_{i\in\omega}$, there is some  $j\in\omega$ such that $X\in S\cap (\cs\setminus U^{\emptyset'}_j)$.  Moreover, since $X$ is not contained in any $\Pi^0_1[\emptyset']$-classes of measure 0, it follows that $\lambda(S\cap (\cs\setminus U^{\emptyset'}_j))>0$.  We set $P=S\cap (\cs\setminus U^{\emptyset'}_j)$, a $\Pi^0_1[\emptyset']$ subset of $\dom(\Phi)$ of positive measure that contains only 2-random sequences.

By the Functional Extension Lemma  (Lemma \ref{lem-fe}) applied to the case that $Z=\jump$, since $\Phi$ is total on $P$, there is a $\emptyset'$-computable functional $\Psi^{\emptyset'}$ that agrees with $\Phi$ on $P$.  Moreover, we can further assume in the case that $Y\notin P$, $\Psi^{\emptyset'}(Y)$ is a finite modification of $Y$. Hereafter, we will write $\Psi^{\jump}$ as $\Psi$.

Since $X\in P$ is 2-random, it follows by randomness preservation (Theorem \ref{thm-pres}(i)) that $\Psi(X)$ is 2-random with respect to $\lambda_\Psi$, the $\emptyset'$-computable measure induced by $\Psi$.  We claim that $\lambda_\Psi$ is continuous. Suppose for the sake of contradiction that $\lambda_\Psi$ is not continuous.  Then there is some $A\in\cs$ such that $\lambda_\Psi(\{A\})=\lambda(\Psi^{-1}(\{A\}))>0$.  We have two cases to consider.
%
%

\medskip

\emph{Case 1}:  $\lambda(\Psi^{-1}(\{A\})\cap P)>0$.  Since $\Phi$ agrees with $\Psi$ on $P$, $\lambda(\Phi^{-1}(\{A\})\cap P)>0$ and hence $A$ is computable by Sacks' Theorem.  However, since $P$ only contains 2-random sequences and $\Phi$ was chosen to be special, no sequence in $P$ computes a computable sequence via $\Phi$.  So this case is impossible.

\medskip

\emph{Case 2}:  $\lambda(\Psi^{-1}(\{A\})\setminus P)>0$.  Then by the definition of $\Psi$, each sequence in $\Psi^{-1}(\{A\})\setminus P$ is a finite modification of $A$.  As the set of finite modifications of a fixed sequence has Lebesgue measure 0, it follows that $\lambda(\Psi^{-1}(\{A\})\cap P)=0$, which contradicts our assumption.

\medskip

\noindent As both cases lead to absurdity, it follows that $\lambda_\Psi$ is continuous as desired.
\end{proof}

Theorem \ref{thm-below2r} has a several immediate consequences.  Recall that $G\in\cs$ is 1-generic if for every $\Sigma^0_1$ $S\subseteq \str$, there is some $\sigma\prec G$ such that either $\sigma\in S$ or for all $\tau\succeq\sigma$, $\tau\notin S$.

\begin{corollary}\label{cor-gen}
There are 1-generic sequences that are Martin-L\"of random with respect to a continuous, $\emptyset'$-computable measure.
\end{corollary}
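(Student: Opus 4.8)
The plan is to deduce the corollary directly from Theorem~\ref{thm-below2r} by producing a single \emph{noncomputable} $1$-generic sequence that lies Turing below a $2$-random sequence. Once such a configuration is in hand there is nothing left to do: Theorem~\ref{thm-below2r} says that any noncomputable sequence Turing below a $2$-random is Martin-L\"of random with respect to a continuous, $\emptyset'$-computable measure, so it suffices to pin down a $1$-generic that sits below some $2$-random and to recall that $1$-generics are never computable.

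The substantive input I would invoke is a classical theorem of Kurtz, namely that every $2$-random sequence computes a $1$-generic sequence. (Equivalently, the class of sequences that compute a $1$-generic has Lebesgue measure one; since the $2$-random sequences also form a measure-one class, the two classes intersect, so in any case there is a $2$-random $X$ and a $1$-generic $G$ with $G\leq_T X$.) I would then record the standard fact that every $1$-generic sequence is noncomputable: if $G$ were computable, then the set $S=\{\sigma\in\str:\sigma\not\prec G\}$ would be computable, and $1$-genericity applied to $S$ would force some $\sigma\prec G$ with no extension lying in $S$, which is absurd, since $\sigma$ has extensions that are not initial segments of $G$.

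With $G$ noncomputable and $G\leq_T X$ for a $2$-random $X$, applying Theorem~\ref{thm-below2r} to $G$ yields exactly the assertion of the corollary. The only real obstacle is the appeal to Kurtz's theorem that $2$-random sequences bound $1$-generics; granting that, the two remaining ingredients — the noncomputability of $1$-generics and the invocation of Theorem~\ref{thm-below2r} — are immediate, so I expect the whole argument to occupy just a few lines.
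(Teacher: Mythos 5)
Your proof is correct and is essentially the paper's own argument: cite the fact that every 2-random sequence computes a 1-generic, note that 1-generics are noncomputable, and apply Theorem~\ref{thm-below2r}. The only difference is attribution (the paper credits this computation result to Kautz \cite{Kau91} rather than Kurtz) and that you spell out the standard noncomputability argument, which the paper takes for granted.
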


\begin{proof}
As shown by Kautz \cite{Kau91}, every 2-random computes a 1-generic sequence.  Since no 1-generic sequence is computable, we can apply Theorem \ref{thm-below2r} to obtain the result.
\end{proof}

This provides us with the first example of a sequence that is not proper but is random with respect to a continuous, $\emptyset'$-computable measure, as Muchnik  \cite{MucSemUsp98} proved that no 1-generic sequence is random with respect to a computable measure.
 
Note further that Corollary \ref{cor-gen} is not true of all 1-generic sequences.  There are $\Delta^0_2$ 1-generic sequences, and by the remark after Lemma \ref{lem-atom}, if a $\Delta^0_2$ sequence $Y$ is random with respect to a $\emptyset'$-computable measure $\mu$, then $Y$ must be an atom of $\mu$ (and hence $\mu$ cannot be continuous).  Moreover, Corollary \ref{cor-gen} fails to hold of any 2-generic sequence (a notion obtained by relativizing the definition of 1-genericity to $\emptyset'$):  by a direct relativization of Muchnik's result mentioned in the previous paragraph, no 2-generic sequence is 2-random with respect to a $\emptyset'$-computable measure.  Using this latter fact, we obtain an alternative proof of the following result due to Nies, Stephan, and Terwijn \cite{NieSteTer05} as a Corollary of Theorem \ref{thm-below2r}.

\begin{corollary}
Every 2-random sequence forms a minimal pair with every 2-generic sequence.
\end{corollary}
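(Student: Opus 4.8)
The plan is to prove the contrapositive-style statement by combining Theorem \ref{thm-below2r} with the two facts the author has just assembled: that every $2$-generic sequence fails to be $2$-random with respect to any $\emptyset'$-computable measure, and that $2$-randomness is preserved in a suitable sense under Turing reductions. Let $X$ be a $2$-random sequence and let $G$ be a $2$-generic sequence; suppose toward a contradiction that $X$ and $G$ do \emph{not} form a minimal pair. Then there is a noncomputable $Y$ with $Y\leq_T X$ and $Y\leq_T G$. The goal is to extract a contradiction from the existence of such a common lower bound.

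The key step is to apply Theorem \ref{thm-below2r} to $Y$: since $Y$ is noncomputable and $Y\leq_T X$ with $X$ a $2$-random sequence, $Y$ is Martin-L\"of random with respect to a continuous, $\emptyset'$-computable measure, say $\mu$. Now I would like to push this randomness property up to $G$, which lies above $Y$ via some Turing functional $\Phi$ with $\Phi(G)=Y$. The natural tool is item (ii) of Theorem \ref{thm-pres} (no randomness from nonrandomness), suitably relativized to $\emptyset'$: because $Y\in\MLR_\mu^{\emptyset'}$ and $Y$ arises as $\Phi(G)$, one expects to be able to locate a $\mu'$-random preimage, where $\mu'$ is a $\emptyset'$-computable measure whose pushforward under $\Phi$ is $\mu$. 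The aim is to conclude that $G$ itself is $2$-random with respect to a $\emptyset'$-computable measure, directly contradicting the relativized Muchnik fact quoted just before the statement.

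The main obstacle, and the step I would spend the most care on, is making the transfer of randomness from $Y$ up to $G$ actually yield $G$ as the random object rather than merely \emph{some} preimage of $Y$. Theorem \ref{thm-pres}(ii) guarantees the existence of a $\mu'$-random $Z$ with $\Phi(Z)=Y$, but a priori $Z$ need not equal $G$; the genericity of $G$ must be invoked to pin down that $G$ plays this role, or else the argument must be rearranged so that the contradiction is obtained from properties of $Y$ alone. A cleaner route, which I would pursue first, is to avoid pushing up to $G$ entirely: observe that $Y\leq_T G$ exhibits $Y$ as a noncomputable sequence computable from a $2$-generic, and then derive a contradiction by noting that a $2$-generic forms a minimal pair with every $2$-random in the arithmetical sense, or that $Y$ being $\mu$-random for a continuous $\emptyset'$-computable $\mu$ while also reducible to $G$ violates the atom characterization following Lemma \ref{lem-atom}. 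The delicate bookkeeping will be ensuring the relativization to $\emptyset'$ is uniform throughout, so that all the measures involved remain genuinely $\emptyset'$-computable.

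Thus the overall strategy is: assume a common noncomputable lower bound $Y$; apply Theorem \ref{thm-below2r} to obtain a continuous $\emptyset'$-computable measure witnessing the randomness of $Y$; and then use the reduction $Y\leq_T G$ together with the relativized Muchnik result to contradict the $2$-genericity of $G$. The heart of the matter is the transfer of the randomness witness across the reduction to $G$, and I would expect this to be where the relativization of Theorem \ref{thm-pres} and the structural facts about $2$-generics do the real work.
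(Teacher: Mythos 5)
Your setup (assume a common noncomputable lower bound $Y$, apply Theorem \ref{thm-below2r} to get that $Y$ is random with respect to a continuous, $\emptyset'$-computable measure, then seek a contradiction with the relativized Muchnik fact) matches the paper's strategy, but neither of your two routes for closing the argument actually works, and the paper's key ingredient is missing. The relativized Muchnik result applies only to sequences that are themselves 2-generic, while your $Y$ is merely \emph{computable from} the 2-generic $G$. Your first route---pushing the randomness of $Y$ up to $G$ via Theorem \ref{thm-pres}(ii)---fails for the reason you yourself identify: no randomness from nonrandomness only produces \emph{some} random preimage $Z$ with $\Phi(Z)=Y$, never $G$ specifically (and it is not even applicable, since it requires $\mu$ to be exhibited as the pushforward $\nu_\Phi$ of some $\emptyset'$-computable measure $\nu$, which is not given). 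Indeed no such transfer can succeed: $G$, being 2-generic, is \emph{not} random with respect to any continuous $\emptyset'$-computable measure, so any argument whose conclusion is ``$G$ is 2-random with respect to such a measure'' must be unsound somewhere; the contradiction has to be located in $Y$, not in $G$. Your second route also fails: invoking ``a 2-generic forms a minimal pair with every 2-random'' is circular (it is the statement being proved), and the atom characterization following Lemma \ref{lem-atom} requires $Y\leq_T\emptyset'$, which you have no way to establish---reducibility to a 2-generic does not make $Y$ a $\Delta^0_2$ set.

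The missing idea, which is exactly what the paper uses, is Martin's downward density theorem for 2-generics (cited via Jockusch \cite{Joc80}): every noncomputable sequence computable from a 2-generic sequence itself computes a 2-generic sequence. This lets one assume without loss of generality that the common lower bound $Y$ is itself 2-generic (replace $Y$ by a 2-generic it computes, which is still noncomputable and still below both $X$ and $G$). Then Theorem \ref{thm-below2r} makes this 2-generic sequence 2-random with respect to a continuous, $\emptyset'$-computable measure, directly contradicting the relativization of Muchnik's theorem. Without downward density, the randomness witness sits on $Y$ while the genericity sits on $G$, and---as your own analysis of the ``transfer'' obstacle shows---there is no mechanism in the paper's toolkit for bringing the two into conflict.
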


\begin{proof}
Suppose there is some noncomputable $Z\in\cs$ that is computable from some 2-random sequence $X$ and from some 2-generic sequence $Y$.  By Jockusch \cite{Joc80} (who attributes the result to Martin), the collection of 2-generic sequences are downward dense, i.e., every noncomputable sequence computable from a 2-generic computes a 2-generic sequence.  So without loss of generality, we can assume that $Z$ is 2-generic (since if $Z$ is not 2-generic, it computes a 2-generic sequence that is still below both $X$ and $Y$).  Moreover, by Theorem \ref{thm-below2r}, since $Z$ is computable from a 2-random sequence, it is 2-random with respect to a $\emptyset'$-computable measure.  But this contradicts the relativization of Muchnik's theorem.
\end{proof}

The next corollary of Theorem \ref{thm-below2r} shows us that one can $\emptyset'$-computably recover unbiased 2-randomness from any noncomputable sequence computable from a 2-random sequence.  

\begin{corollary}
For every noncomputable sequence $X$ computable from some 2-random sequence, $X\oplus \emptyset'$ computes a 2-random sequence.
\end{corollary}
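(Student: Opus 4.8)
The plan is to upgrade the conclusion of Theorem \ref{thm-below2r} by transporting the continuous $\emptyset'$-computable measure to Lebesgue measure via its cumulative distribution function. First I would apply Theorem \ref{thm-below2r} to the given noncomputable sequence $X$ (which is computable from a 2-random) to obtain a continuous, $\emptyset'$-computable measure $\mu$ with $X\in\MLR_\mu^{\emptyset'}$; as explained in Section \ref{sec-intro}, for a $\emptyset'$-computable measure it suffices to test relative to $\emptyset'$, so this is genuine $\emptyset'$-relative $\mu$-randomness.

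Next I would introduce the cumulative distribution map $g_\mu\colon\subseteq\cs\to\cs$ sending $Z$ to the binary expansion of the real
\[
r_\mu(Z)=\mu\bigl(\{W\in\cs: W<_{\mathrm{lex}}Z\}\bigr)=\sum_{n\,:\,Z(n)=1}\mu\bigl(\llb (Z\uh n)^\frown 0\rrb\bigr),
\]
where the second equality records the disjoint decomposition of the set of lexicographic predecessors of $Z$. Since $\mu$ is $\emptyset'$-computable, each summand is approximable to any precision from $\emptyset'\oplus Z$, and the tails are controlled because $\mu(\llb Z\uh N\rrb)\to 0$ as $\mu$ is continuous; hence $r_\mu(Z)$ is approximable from $\emptyset'\oplus Z$. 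On the set of $Z$ for which $r_\mu(Z)$ is not a dyadic rational the individual bits stabilize, so $g_\mu$ is a $\emptyset'$-computable Turing functional there. The role of continuity is that $r_\mu$ is then continuous, so its pushforward is exactly Lebesgue measure, i.e.\ $\mu_{g_\mu}=\lambda$; in particular the set of $Z$ with $r_\mu(Z)$ dyadic is $\mu$-null, whence $\mu(\dom(g_\mu))=1$.

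With $g_\mu$ in hand the argument closes quickly. Applying randomness preservation (Theorem \ref{thm-pres}(i)) relative to $Z=\emptyset'$ to the $\emptyset'$-computable functional $g_\mu$ and the measure $\mu$ yields $g_\mu(X)\in\MLR_{\mu_{g_\mu}}^{\emptyset'}=\MLR_\lambda^{\emptyset'}$; that is, $g_\mu(X)$ is 2-random. Since $g_\mu$ is computed using the oracle $\emptyset'$ together with its input, $g_\mu(X)\leq_T X\oplus\emptyset'$, so $X\oplus\emptyset'$ computes a 2-random sequence, as required.

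I expect the main obstacle to be the careful verification that $g_\mu$ is a genuine $\emptyset'$-computable functional and that $\mu_{g_\mu}=\lambda$, rather than any step after that. Both points hinge on the continuity of $\mu$: continuity is exactly what forces $r_\mu$ to be continuous (so the CDF has no jumps and its pushforward fills out all of $[0,1]$), and it is what guarantees that the troublesome dyadic values, on which $g_\mu$ fails to define a sequence, form a $\mu$-null set. This entire argument is the relativization to $\emptyset'$ of the classical fact that a sequence random with respect to a computable continuous measure is truth-table equivalent to a Lebesgue-random sequence.
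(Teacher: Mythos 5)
Your proposal is correct and follows essentially the same route as the paper: apply Theorem \ref{thm-below2r} to get a continuous, $\emptyset'$-computable measure $\mu$ with respect to which $X$ is random, then convert this $\mu$-randomness into unbiased 2-randomness computably in $X\oplus\emptyset'$. The only difference is that where the paper simply cites the relativization to $\emptyset'$ of the Levin--Kautz--Schnorr--Fuchs theorem (every sequence random with respect to a computable measure computes, with the oracle's help, a Martin-L\"of random sequence), you prove the continuous-measure case of that cited result directly via the cumulative distribution transform $g_\mu$ with $\mu_{g_\mu}=\lambda$ --- which is the standard proof of that fact, carried out correctly.
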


\begin{proof}
Let $X$ be a noncomputable sequence that is computable from some 2-random sequence.  Thus by Theorem \ref{thm-below2r}, $X$ is 2-random with respect to a $\emptyset'$-computable measure.  As shown independently by Levin \cite{ZvoLev70} and Kautz \cite{Kau91} (as well as by Schnorr and Fuchs \cite{SchFuc77}), for every sequence $Z$ that is random with respect to some computable measure, there is some Martin-L\"of random sequence $Y$ such that $Y\leq_TZ$ (in fact, $Y\equiv_T Z$).  Relativizing this result to $\emptyset'$, we get that for every sequence $Z$ that is 2-random with respect to a $\emptyset'$-computable measure, there is some 2-random sequence $Y$ such that $Y\leq_T Z\oplus \emptyset'$.  Applying this result to $X$ as given above yields the desired conclusion.
\end{proof}

We conclude this section with one last corollary of Theorem \ref{thm-below2r} and an open question.  Recall that $\mathrm{NCR}$ is the collection of sequences that are not random with respect to a continuous measure (first introduced by Reimann and Slaman in \cite{ReiSla15}).  An immediate consequence of Theorem \ref{thm-below2r} is the following.

\begin{corollary}
No 2-random sequence computes a noncomputable member of $\mathit{NCR}$.
\end{corollary}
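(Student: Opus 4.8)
The plan is to read this off directly from Theorem~\ref{thm-below2r}. Suppose $X$ is a 2-random sequence and $Y\leq_T X$ is noncomputable; I want to conclude that $Y\notin\mathit{NCR}$, i.e., that $Y$ is random with respect to \emph{some} continuous measure. But $Y$ meets precisely the hypotheses of Theorem~\ref{thm-below2r}, which produces a continuous, $\jump$-computable measure $\mu$ with $Y\in\MLR_\mu^{\jump}$. So the entire content of the corollary is the observation that this conclusion already places $Y$ outside $\mathit{NCR}$.

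To make that observation precise, I would reconcile the simplified ``tests relative to $\jump$'' framework used throughout this paper with the representation-based definition of continuous randomness underlying $\mathit{NCR}$. Since $\mu$ is $\jump$-computable, it admits a representation $R_\mu$ with $R_\mu\leq_T\jump$. Any $\mu$-test relative to $R_\mu$ is a uniformly $\Sigma^0_1[R_\mu]$ sequence of sets, hence uniformly $\Sigma^0_1[\jump]$, so it is in particular a $\mu$-test relative to $\jump$. Because $Y\in\MLR_\mu^{\jump}$ means $Y$ passes every $\mu$-test relative to $\jump$, it follows that $Y$ passes every $\mu$-test relative to $R_\mu$; thus $Y$ is Martin-L\"of random with respect to the continuous measure $\mu$ via the representation $R_\mu$, and therefore $Y\notin\mathit{NCR}$.

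I do not expect any genuine obstacle here: once one notes the inclusion $\Sigma^0_1[R_\mu]\subseteq\Sigma^0_1[\jump]$ whenever $R_\mu\leq_T\jump$, the reduction is immediate and requires no new measure-theoretic or combinatorial input. The corollary is simply a repackaging of Theorem~\ref{thm-below2r}, the only care being the passage between the two formulations of randomness with respect to a continuous measure.
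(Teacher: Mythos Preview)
Your proposal is correct and matches the paper's approach exactly: the paper states the corollary as an immediate consequence of Theorem~\ref{thm-below2r} with no further argument, and you have simply spelled out the one-line observation (already foreshadowed in the paper's introduction) that randomness relative to $\jump$ with respect to a $\jump$-computable continuous measure yields continuous randomness in the representation sense.
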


We cannot weaken this result to hold for Demuth randomness.  We do not provide a definition of Demuth randomness here (see, for instance, \cite[Section 7.6]{DowHir10}).  For our purpose, the key fact is that the collection of 2-random sequences is a proper subset of the collection of Demuth random sequences.  
Let $Y$ be a Demuth random sequence that is not weakly 2-random. Then by a result of Hirschfeldt and Miller, $Y$ computes a noncomputable c.e.\ set $A$ (see \cite[Corollary 7.2.12]{DowHir10}).  As shown by Ku\v cera and Nies \cite{KucNie11}, such a c.e.\ set must be $K$-trivial 
(see \cite[Section 5.2]{Nie09} for a definition of $K$-triviality).  Lastly, Barmpalias, Greenberg, Montalb\'an, and Slaman \cite{BarGreMon12} showed that every $K$-trivial sequence is in $\mathrm{NCR}$.  Putting all of these pieces together, this yields a Demuth random sequence that computes a noncomputable member of $\mathrm{NCR}$.  A similar argument does not work for weak 2-randomness, as no weakly 2-random sequence computes a noncomputable $\Delta^0_2$ sequence (and every $K$-trivial sequence is $\Delta^0_2$).  We thus can ask:

\begin{question}
Can a weakly 2-random sequence compute a noncomputable member of $\mathit{NCR}$?
\end{question}

\bigskip

\section{The jump and pseudojumps of a 2-random sequence}\label{sec-operators}

In this section, we continue our study of continuous randomness by studying the behavior 
of 2-random sequences under a broader class of effective operators beyond Turing functionals.  Here we consider the Turing jump, c.e.\ operators, pseudojump operators, and operators defined in terms of pseudojump inversion.   As we will see, these too yield sequences that are continuously random.  We first consider the jump of a 2-random sequence.

\begin{theorem}\label{thm-jump}
For $X\in\cs$, if $X$ is 2-random, then $X'$ is Martin-L\"of random with respect to a continuous, $\emptyset'$-computable measure.
\end{theorem}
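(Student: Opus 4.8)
The plan is to prove that $X'$ is random with respect to a continuous, $\emptyset'$-computable measure by exhibiting $X'$ as the image of the 2-random sequence $X$ under an appropriate $\emptyset'$-computable functional, and then invoking randomness preservation together with the same strategy used in Theorem~\ref{thm-below2r} to rule out atoms.

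Let me think about whether $X'$ is even computable from $X$ relative to $\emptyset'$. The jump $X' = \{e : \Phi_e^X(e)\halts\}$. Given $X$ as oracle, computing $X'(e)$ requires deciding a halting problem relative to $X$, which is generally only $X'$-computable, not $X$-computable. So $X'$ is NOT the image of $X$ under a plain Turing functional. But the key fact available is that every 2-random $X$ is generalized low: $X' \equiv_T X \oplus \emptyset'$. This is exactly the technique cited in the proof of Theorem~\ref{thm-below2r}. So there is a $(X\oplus\emptyset')$-computation of $X'$ — equivalently, a $\emptyset'$-computable functional $\Phi^{\emptyset'}$ with $\Phi^{\emptyset'}(X) = X'$.

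So here is the strategy in steps. First, use generalized lowness to obtain a $\emptyset'$-computable functional $\Phi^{\emptyset'}$ with $\Phi^{\emptyset'}(X)=X'$; since $X$ is 2-random this functional is defined on a positive-measure (in fact, relative to $\emptyset'$, positive-measure) set, and as in Theorem~\ref{thm-below2r} I would cut down to a $\Pi^0_1[\emptyset']$ class $P \ni X$ of positive measure containing only 2-random sequences on which $\Phi^{\emptyset'}$ is total. Then apply the Functional Extension Lemma (Lemma~\ref{lem-fe}) with $Z = \emptyset'$ to get a total $\emptyset'$-computable functional $\Psi$ agreeing with $\Phi^{\emptyset'}$ on $P$, and such that $\Psi(W)$ is a finite modification of $W$ for $W \notin P$. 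By randomness preservation (Theorem~\ref{thm-pres}(i)), $\Psi(X)=X'$ is 2-random with respect to $\lambda_\Psi$, which is $\emptyset'$-computable. Finally, to show $\lambda_\Psi$ is continuous, suppose $\lambda_\Psi(\{A\})>0$; the finite-modification case is handled exactly as Case~2 in Theorem~\ref{thm-below2r}. The remaining case is $\lambda(\Psi^{-1}(\{A\}) \cap P)>0$, whence $\lambda(\Phi^{-1}(\{A\})\cap P)>0$, and by a relativization of Sacks' theorem to $\emptyset'$ this forces $A \leq_T \emptyset'$, i.e.\ $A$ is $\Delta^0_2$. But every element of $P$ is 2-random, hence weakly 2-random, hence forms a minimal pair with $\emptyset'$, so no 2-random sequence $X \in P$ has $\Phi(X) = A \leq_T \emptyset'$ unless $A$ is computable — and if $A$ is computable then $\Phi^{\emptyset'}(X)=X'$ would be computable, contradicting that $X$ is noncomputable.

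\textbf{The main obstacle} I anticipate is the very first step: verifying that generalized lowness genuinely yields a \emph{$\emptyset'$-computable functional} $\Phi^{\emptyset'}$ computing $X'$ that is uniform enough to plug into the machinery, rather than merely a Turing equivalence $X' \equiv_T X \oplus \emptyset'$ for the single sequence $X$. I would need a uniform reduction — a fixed functional $\Phi^{\emptyset'}$ such that $\Phi^{\emptyset'}(X) = X'$ for all $X$ in a measure-one (relative to $\emptyset'$) set — which is precisely what the Kautz-style argument referenced after Lemma~\ref{lem-fe} provides. I must also confirm that the Sacks' theorem invocation relativizes cleanly: the relativized statement is that if $\lambda(\Phi^{-1}(\{A\}))>0$ for a $\emptyset'$-computable functional $\Phi$, then $A \leq_T \emptyset'$, which is the correct analogue and suffices, since we only need $A$ to be $\Delta^0_2$ to derive a contradiction from the minimal-pair property of weakly 2-random sequences.
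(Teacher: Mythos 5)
Your overall architecture --- exhibit $X'$ as the image of $X$ under a total $\emptyset'$-computable functional, apply randomness preservation, then rule out atoms via a relativized Sacks argument plus 2-randomness --- is exactly the paper's strategy, and your identification of the Kautz-style generalized-lowness construction as the source of the functional is correct. But there is a genuine gap in your atom argument. From $\lambda(\Psi^{-1}(\{A\})\cap P)>0$ and Stillwell's relativization of Sacks' theorem you correctly get $A\leq_T\emptyset'$. Your next step, however --- ``every element of $P$ is 2-random, hence forms a minimal pair with $\emptyset'$, so no 2-random $Y\in P$ has $\Phi^{\emptyset'}(Y)=A$ unless $A$ is computable'' --- is invalid. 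The minimal-pair property of (weakly) 2-random sequences controls sets Turing \emph{below} $Y$, but $\Phi^{\emptyset'}$ is a $\emptyset'$-computable functional, so $\Phi^{\emptyset'}(Y)=A$ only yields $A\leq_T Y\oplus\emptyset'$, not $A\leq_T Y$. The claim is simply false for general $\emptyset'$-computable functionals: the constant functional sending every oracle to $\emptyset'$ is $\emptyset'$-computable and maps every 2-random to a noncomputable $\Delta^0_2$ sequence. Worse, the theorem's conclusion itself can fail for an arbitrary witness to $X'\leq_T X\oplus\emptyset'$: take any such witness and modify it to output the fixed sequence $\emptyset'$ on all oracles not extending some long prefix of $X$; this still computes $X'$ from $X$, yet its pushforward measure has an atom at $\emptyset'$. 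So continuity is not a consequence of generalized lowness alone --- the \emph{choice} of functional matters. (There is also a secondary confusion at the end: if $A$ were computable, nothing forces $X\in\Psi^{-1}(\{A\})$, so you cannot conclude that $X'=A$ is computable; any contradiction must come from the members $Y$ of $\Psi^{-1}(\{A\})$, not from $X$.)

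What closes the gap --- and what the paper does --- is to use not merely \emph{some} functional computing $X'$ from $X$, but the specific one built from the jump sets: with $S_n=\{Z\in\cs: \Phi_n(Z;n)\halts\}$, $f(n)$ the least $s$ with $\lambda(S_n\setminus S_{n,s})\leq 2^{-n}$ (so $f\leq_T\emptyset'$), and $k$ chosen for $X$, define $\Phi(Z;n)=1$ iff $Z\in S_{n,f(n)+k}$. This $\Phi$ is total outright (the sets $S_{n,f(n)+k}$ are clopen), so the class $P$ and the Functional Extension Lemma are not needed at all; and, crucially, since every 2-random $Y$ passes the test $W_n=\bigcup_{i>n}(S_i\setminus S_{i,f(i)})$, the output $\Phi(Y)$ agrees with $Y'$ on all but finitely many bits, hence $\Phi(Y)\equiv_T Y'$ for \emph{every} 2-random $Y$, not just for $X$. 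Now the atom argument runs: a 2-random $Y\in\Phi^{-1}(\{A\})$ gives $A\equiv_T Y'$, and combining with $A\leq_T\emptyset'$ from Stillwell yields $Y\leq_T Y'\equiv_T A\leq_T\emptyset'$, so $Y$ would be a noncomputable $\Delta^0_2$ 2-random sequence --- precisely what the minimal-pair property forbids. In short, the missing idea is the uniformity $\Phi(Y)\equiv_T Y'$ across all 2-randoms; you flagged obtaining a uniform functional as ``the main obstacle,'' but then never used that uniformity at the one place where it is indispensable.
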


\begin{proof}
We proceed with a proof similar to that of Theorem \ref{thm-below2r}, with several modifications.  First, we set $S_n=\{Z\in\cs\colon \Phi_n(Z;n)\halts\}$ for each $n\in\omega$. Then as in the proof of Theorem \ref{thm-below2r}, we define a function $f\leq_T\jump$ such that for each $n\in\omega$, $f(n)$ is the least stage $s$ such that $\lambda(S_n\setminus S_{n,s})\leq 2^{-n}$.  Then we set $V_n=S_n\setminus S_{n,f(n)}$ for each $n\in\omega$, and we further set $W_n=\bigcup_{i>n}V_i$, yielding $(W_n)_{n\in\omega}$, a Martin-L\"of test relative to $\jump$.   Since $X\notin W_j$ for some $j\in\omega$, it follows that $X\notin V_i$ for all $i>j$.

Now for each $n\in\omega$, $n\in X'$ if and only if $X\in S_n$.  Moreover, for all $n>j$, $X\notin S_n\setminus S_{n,f(n)}$.  Thus, for all $n>j$ such that $X\in S_n$, we must have $X\in S_{n,f(n)}$.  We can thus conclude that for all but finitely many $n$, $n\in X'$ if and only if $X\in S_{n,f(n)}$.  Then there is some $k\in\omega$ such that for all $n\in\omega$, $n\in X'$ if and only if $X\in S_{n,f(n)+k}$. 
 
We then define a total $\jump$-computable functional $\Phi$ as follows:

\[
\Phi(Z;n)=
	\left\{
		\begin{array}{ll}
			1 & \mbox{if } Z\in S_{n,f(n)+k} \\
			0 & \mbox{otherwise.}
		\end{array}
	\right.
\]
It is immediate that $\Phi(X)=X'$.  Note further that for any 2-random sequence $Y$ and all but finitely many $n$, since $n\in Y'$ if and only if $Y\in S_{n,f(n)+k}$, it follows that $\Phi(Y)\equiv_T Y'$.

Clearly $\Phi$ is total.  Let $\lambda_\Phi$ be the $\jump$-computable measure induced by $\Phi$. Then by randomness preservation, $X'=\Phi(X)$ is Martin-L\"of random with respect to $\lambda_\Phi$.  We verify that $\lambda_\Phi$ is continuous.  Suppose otherwise, so that $\lambda_\Phi(\{A\})>0$ for some $A\in\cs$.  Then $\lambda(\{Y\in\cs\colon A\leq_T Y\oplus \jump\})>0$.  It follows from the relativization of Sacks' Theorem (due to Stillwell \cite{Sti72}) that $A\leq_T\jump$.  In addition, since $\lambda(\Phi^{-1}(\{A\}))>0$, $\Phi^{-1}(\{A\})$ must contain some 2-random sequence.  However, since no 2-random sequence computes a noncomputable $\Delta^0_2$ sequence, it follows that $A$ must be computable.  But for each 2-random sequence $Y\in\Phi^{-1}(\{A\})$, we have $A=\Phi(Y)\equiv_T Y'$, which is impossible.  Thus $\lambda_\Phi$ cannot have any atoms.

\end{proof}

Theorem \ref{thm-jump} provides additional examples of sequences that are not proper but random with respect to a continuous, $\emptyset'$-computable measure, namely the jump of every 2-random sequence.  To establish this, we just need to prove the following.

\begin{lemma}
For every $X\in\cs$, $X'$ is not proper.
\end{lemma}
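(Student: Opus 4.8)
The plan is to reduce the general statement to the single instance $X=\emptyset$, and then to show directly that the noncomputable c.e.\ set $\emptyset'$ is not proper. The reduction rests on the observation that the (unrelativized) halting problem many-one reduces to \emph{every} jump: there is a single computable function $g$, independent of $X$, with $n\in\emptyset'\iff g(n)\in X'$ for all $n$ (the index built from $e$ simply runs $\varphi_e(e)$, ignoring its oracle). This $g$ induces a total computable functional $\Theta\colon\cs\to\cs$, $\Theta(Z)(n)=Z(g(n))$, with $\Theta(X')=\emptyset'$. If $X'$ were random with respect to a computable measure $\mu$, then the pushforward $\mu_\Theta$ would again be computable, and randomness preservation (Theorem~\ref{thm-pres}(i), with oracle $\emptyset$) would give $\emptyset'=\Theta(X')\in\MLR_{\mu_\Theta}$. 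Hence it suffices to prove that $\emptyset'$ is not proper; the general case follows by contraposition.

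For the base case, I would suppose toward a contradiction that $\emptyset'$ is random with respect to a computable measure $\mu$. Since $\emptyset'$ is noncomputable, Lemma~\ref{lem-atom} (with $Z=\emptyset$) shows it is not an atom of $\mu$, so $\mu(\{\emptyset'\})=0$. I would then use this non-atomicity to transport $\emptyset'$ to an unbiased Martin-L\"of random sequence: applying the probability-integral transform associated with $\mu$ (the sequence-level refinement of the fact recalled in Section~\ref{sec-intro}, that a sequence random with respect to a computable, continuous measure is truth-table equivalent to an unbiased random), one obtains a Martin-L\"of random $Y$ together with a total reduction $Y=\Gamma^{\emptyset'}$ whose use $u(n)$ is computably bounded. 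The purpose of arranging a computably bounded use is as follows: because $\emptyset'$ is c.e., it has a monotone approximation $(\emptyset'_s)$ in which each bit switches from $0$ to $1$ at most once, so $\Gamma^{\emptyset'_s}(n)$ can change at most $u(n)$ times as $s$ grows. Thus $Y$ is $\omega$-c.e. But no Martin-L\"of random sequence is $\omega$-c.e., which is the desired contradiction.

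The routine ingredients here are the reduction $\emptyset'\leq_m X'$, the computability of the pushforward, and the standard fact that no Martin-L\"of random sequence is $\omega$-c.e.\ (equivalently, truth-table reducible to $\emptyset'$). The delicate step, and the one I expect to require the most care, is extracting from $\mu$ a genuinely \emph{total} reduction of an unbiased random $Y$ to $\emptyset'$ with \emph{computably bounded use}, rather than a mere Turing reduction. A bare Turing equivalence cannot suffice: since $\emptyset'\equiv_T\Omega$ with $\Omega$ random, no degree-theoretic argument can succeed, so the proof must exploit the sequence-level c.e.\ structure of $\emptyset'$. Concretely, one must accommodate the atoms of $\mu$ lying away from $\emptyset'$ together with the measure-zero set of dyadic images, patching the transform on this null set so that it remains total with a computable use bound while preserving its value at the random point $\emptyset'$.
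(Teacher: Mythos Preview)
Your reduction from $X'$ to $\emptyset'$ via $\emptyset' \leq_m X'$ and randomness preservation is correct, and the paper uses exactly this downward-closure-under-$tt$ idea. However, your base case fails: the asserted ``standard fact that no Martin-L\"of random sequence is $\omega$-c.e.'' is \emph{false}. By the superlow basis theorem applied to a $\Pi^0_1$ class of Martin-L\"of random sequences, there is a superlow Martin-L\"of random $A$, i.e., one with $A' \leq_{tt} \emptyset'$; composing with $A \leq_m A'$ gives $A \leq_{tt} \emptyset'$, so $A$ is an $\omega$-c.e.\ Martin-L\"of random sequence. You correctly observed that $\emptyset' \equiv_T \Omega$ rules out a purely degree-theoretic argument; the existence of superlow randoms shows that even the finer $\omega$-c.e.\ structure is not enough. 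This breaks the contradiction you were aiming for, independently of whether your ``delicate step'' of extracting a reduction with computably bounded use from the Levin--Kautz transform can actually be carried out---and that step is itself problematic when $\mu$ has atoms, since there need be no \emph{computable} bound on the $m$ required to make $\mu(\llb\emptyset'\uh m\rrb) < 2^{-n}$.

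The paper avoids both issues by pushing the $tt$-reduction one step further: instead of stopping at $\emptyset'$, it passes to a noncomputable \emph{incomplete} c.e.\ set $C \leq_m \emptyset' \leq_m X'$. If $X'$ were proper then so would $C$ be; by Levin--Kautz (only the Turing-equivalence version is needed) $C$ would then be Turing equivalent to a Martin-L\"of random and hence of DNC degree, contradicting Arslanov's completeness criterion. The point is that Arslanov already yields a contradiction at the level of Turing degrees for incomplete c.e.\ sets, so no control on the use of the Levin--Kautz reduction is required.
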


\begin{proof}
We recall one definition and two facts.  First, a sequence $Z$ has diagonally noncomputable (DNC) degree if there is some $f\leq_TZ$ such that $f(n)\neq\phi_n(n)$ for all $n\in\omega$.   As shown by Ku\v cera \cite{Kuc85}, every Martin-L\"of random sequence has DNC degree.  Moreover, Arslanov's Completeness Criterion \cite{Ars81} says that  no intermediate c.e.\ set has DNC degree.  

Now, suppose that $X'$ is proper for some $X\in\cs$.  Let $C$ be a noncomputable, incomplete c.e.\ set.  As $C\leq_m\emptyset'\leq_m X'$, we have $C\leq_{tt} X'$.  By randomness preservation, the property of being proper is closed  downwards under $\mathit{tt}$-reducibility, and thus it follows that $C$ is proper.  However, every noncomputable proper sequence is Turing equivalent to a Martin-L\"of random sequence (as shown independently by Levin \cite{ZvoLev70} and Kautz \cite{Kau91}) and hence has DNC degree, which contradicts Arslanov's Completeness Criterion.
\end{proof}

Using Theorem \ref{thm-jump}, we can further consider the behavior of 2-random sequences under c.e.\ operators and pseudojump operators.  Recall that a c.e.\ operator is given by considering the domain of an oracle Turing machine with a fixed oracle; for $e\in\omega$, the $e$-th c.e.\ operator is given by the map $X\mapsto W_e^X$.  By a relativization of Posts's theorem that the halting set is 1-complete, for every $e\in\omega$ and $X\in\cs$, we have $W_e^X\leq_1 X'$.  We use this fact to prove the following.

\begin{proposition}\label{prop-pj1}
Suppose that for $e\in\omega$, we have $W_e^Y\not\leq_T\jump$ for every 2-random sequence $Y$.  If $X$ is a 2-random sequence, then $W_e^X$ is Martin-L\"of random with respect to a continuous, $\jump$-computable measure.
\end{proposition}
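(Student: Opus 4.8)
The plan is to reduce the problem to Theorem \ref{thm-jump} by composing the jump functional constructed there with the computable reduction witnessing $W_e^Z \leq_1 Z'$. Recall from the proof of Theorem \ref{thm-jump} that there is a total $\jump$-computable functional $\Phi$ satisfying $\Phi(X) = X'$ and, more generally, $\Phi(Y) \equiv_T Y'$ for every 2-random sequence $Y$ (in fact $\Phi(Y)$ and $Y'$ agree on all but finitely many inputs). By the relativization of Post's theorem recorded above, there is a computable injection $g$ with $n \in W_e^Z \iff g(n) \in Z'$ for every $Z \in \cs$. I would then define a total $\jump$-computable functional $\Theta$ by $\Theta(Z; n) = \Phi(Z; g(n))$. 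For our fixed 2-random $X$ this gives $\Theta(X; n) = \Phi(X; g(n)) = X'(g(n)) = W_e^X(n)$, so $\Theta(X) = W_e^X$.

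The first key observation is that $\Theta(Y) \equiv_T W_e^Y$ for \emph{every} 2-random sequence $Y$, not merely for $X$. Indeed, $\Theta(Y; n)$ and $W_e^Y(n) = Y'(g(n))$ can differ only at those $n$ for which $\Phi(Y)$ and $Y'$ differ at $g(n)$; since $\Phi(Y)$ and $Y'$ differ on only finitely many inputs and $g$ is injective, $\Theta(Y)$ and $W_e^Y$ differ on only finitely many inputs, whence $\Theta(Y) \equiv_T W_e^Y$. Let $\lambda_\Theta$ be the $\jump$-computable pushforward measure induced by $\Theta$. Since $X$ is 2-random (equivalently, $X \in \MLR_\lambda^{\jump}$) and $\Theta$ is total, randomness preservation (Theorem \ref{thm-pres}(i), applied with $Z = \jump$) yields $W_e^X = \Theta(X) \in \MLR_{\lambda_\Theta}^{\jump}$. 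It thus remains to show that $\lambda_\Theta$ is continuous.

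For continuity I would argue by contradiction, closely following Theorem \ref{thm-jump}. Suppose $\lambda_\Theta(\{A\}) > 0$ for some $A \in \cs$, so that $\lambda(\Theta^{-1}(\{A\})) > 0$. Every 2-random $Y \in \Theta^{-1}(\{A\})$ satisfies $A = \Theta(Y) \leq_T \Phi(Y) \equiv_T Y' \equiv_T Y \oplus \jump$, using that 2-randoms are generalized low; since the 2-randoms have measure one, the set $\{Y : A \leq_T Y \oplus \jump\}$ has positive measure, and the relativization of Sacks' Theorem due to Stillwell then forces $A \leq_T \jump$. On the other hand, $\Theta^{-1}(\{A\})$ has positive measure, so it contains some 2-random $Y$, and for this $Y$ we have $W_e^Y \equiv_T \Theta(Y) = A \leq_T \jump$, contradicting the hypothesis that $W_e^Y \not\leq_T \jump$ for every 2-random $Y$. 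Hence $\lambda_\Theta$ has no atoms, and it is the desired continuous, $\jump$-computable measure.

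The main obstacle, and the step requiring the most care, is the passage from $\Theta(X) = W_e^X$ for the fixed $X$ to the uniform equivalence $\Theta(Y) \equiv_T W_e^Y$ across all 2-random $Y$, since it is precisely this that lets the hypothesis $W_e^Y \not\leq_T \jump$ be applied to the 2-random witness produced inside the continuity argument. This uniformity hinges on two facts holding simultaneously: that $\Phi$ agrees with the jump on \emph{all} 2-randoms up to finitely many bits (a feature of the Theorem \ref{thm-jump} construction, not just a property of $X$), and that the reduction $W_e^Z \leq_1 Z'$ is a genuine $1$-reduction, so that injectivity of $g$ confines the finite disagreement between $\Phi(Y)$ and $Y'$ to finitely many coordinates of $\Theta(Y)$.
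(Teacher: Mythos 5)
Your proof is correct, and its core construction agrees with the paper's: you compose the jump functional $\Phi$ of Theorem \ref{thm-jump} with the functional induced by the $1$-reduction $W_e^Z \leq_1 Z'$, so your pushforward $\lambda_\Theta$ is the same measure as the paper's $\mu_\Psi$ (pushing $\lambda$ forward under the composition equals pushing the Theorem \ref{thm-jump} measure forward under the $1$-reduction functional), and the randomness preservation step is identical in substance. Where you genuinely differ is the continuity argument. The paper obtains $A \leq_T \jump$ from Lemma \ref{lem-atom} (atoms of $\jump$-computable measures are $\Delta^0_2$) and then applies Theorem \ref{thm-pres}(ii) (no randomness from nonrandomness) twice --- once through the $1$-reduction functional and once through $\Phi$ --- to produce a 2-random $Z$ with $W_e^Z = A$. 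You instead obtain $A \leq_T \jump$ from Stillwell's relativized Sacks theorem, exactly as in the continuity argument inside Theorem \ref{thm-jump} itself, and you find the 2-random witness by the elementary observation that a positive-measure set must meet the measure-one class of 2-random sequences. Both routes are valid, but yours buys two things: it avoids Theorem \ref{thm-pres}(ii) entirely, and it makes explicit the uniformity that the final contradiction actually requires --- namely that $\Theta(Y) \equiv_T W_e^Y$ for \emph{every} 2-random $Y$, which you correctly derive from the finite-agreement property of $\Phi$ on all 2-randoms (noted after its construction in the paper) together with the injectivity of the $1$-reduction; the paper's assertion that NRFN yields $Z$ with $W_e^Z = A$ exactly, rather than merely $W_e^Z \equiv_T A$, silently relies on the same two facts. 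One small simplification is available to you: since $\Theta$ is a $\jump$-computable functional, $\Theta(Y) \leq_T Y \oplus \jump$ holds for every $Y \in \Theta^{-1}(\{A\})$ outright, so the Stillwell step needs neither generalized lowness nor any restriction to 2-random $Y$.
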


\begin{proof}
Fix $e\in\omega$ as in the statement of the proposition. As $W_e^X\leq_1 X'$, this 1-reduction induces a total Turing functional $\Psi$. Given a 2-random $X\in\cs$, let $\mu$ be a $\jump$-computable measure such that $X'$ is $\emptyset'$-Martin-L\"of random with respect to $\mu$ (which is guaranteed to exist by Theorem \ref{thm-jump}).  By randomness preservation, $W_e^X$ is random with respect to the induced $\emptyset'$-computable measure $\mu_\Psi$.  We claim that $\mu_\Psi$ is continuous.  Suppose not.  Then $\mu_\Psi(\{A\})>0$ for some $A\in\cs$.  Since $\mu_\Psi$ is $\jump$-computable and $A$ is a $\mu_\Psi$-atom, it follows from Lemma \ref{lem-atom} that $A$ is $\Delta^0_2$.  By Theorem \ref{thm-pres}(ii) (no randomness from nonrandomness) applied to both $\Psi$ and the $\emptyset'$-computable functional induced by taking the jump of $X$,  the set $\{Y: W_e^Y=A\}$ must contain a 2-random sequence $Z$, i.e., $W_e^Z=A$.  However, this contradicts our assumption that $W_e^Y\not\leq_T\jump$ for every 2-random sequence $Y$.  Thus $\mu_\Psi$ must be continuous.
\end{proof}

A \emph{pseudojump operator} is given by a map of the form $X\mapsto X\oplus W_e^X$ for some $e\in\omega$ (see \cite{JocSho83}, \cite{JocSho84}).  We now obtain a result similar to Proposition \ref{prop-pj1} for pseudojump operators, except that
 we do not need the additional assumption that $W_e^Y\not\leq_T\jump$ for every 2-random sequence $Y$.

\begin{proposition}\label{prop-pj2}
For every $e\in\omega$ and every 2-random sequence $X$, $X\oplus W_e^X$ is Martin-L\"of random with respect to a continuous, $\jump$-computable measure.
\end{proposition}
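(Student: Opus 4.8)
The plan is to mirror the proof of Proposition~\ref{prop-pj1}, routing the argument through the jump $X'$ and transferring randomness along a total functional, but to exploit the extra information carried by the pseudojump in order to dispense with the hypothesis $W_e^Y\not\leq_T\jump$. Fix $e\in\omega$ and a $2$-random $X$. By the relativized form of Post's theorem we have $W_e^X\leq_1 X'$, and trivially $X\leq_1 X'$; joining these two (injective) reductions yields a single total Turing functional $\Psi$ with $\Psi(X')=X\oplus W_e^X$, where $\Psi$ reads the even output bit $2n$ off the $X'$-position coding $X(n)$ and the odd output bit $2n+1$ off the $X'$-position coding $W_e^X(n)$. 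Crucially, $\Psi$ is uniform: $\Psi(Z')=Z\oplus W_e^Z$ for every $Z$, and since the coding functions are injective, $\Psi$ sends inputs differing in finitely many bits to outputs differing in finitely many bits.

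Next I would apply Theorem~\ref{thm-jump}: since $X$ is $2$-random, $X'$ is Martin-L\"of random with respect to a continuous, $\jump$-computable measure $\mu$ (obtained there as the pushforward of $\lambda$ under a total $\jump$-computable jump functional $\Phi$ satisfying that $\Phi(Y)$ agrees with $Y'$ off a finite set for every $2$-random $Y$). By randomness preservation (Theorem~\ref{thm-pres}(i)) applied to $\Psi$ and $\mu$, the image $\Psi(X')=X\oplus W_e^X$ is Martin-L\"of random with respect to the induced $\jump$-computable measure $\mu_\Psi$. It then remains only to show that $\mu_\Psi$ is continuous.

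The heart of the argument is ruling out atoms, and this is where the pseudojump does the work. Suppose $\mu_\Psi(\{B\})>0$ for some $B\in\cs$. By Lemma~\ref{lem-atom}, since $\mu_\Psi$ is $\jump$-computable, $B\leq_T\jump$. Now $\mu_\Psi=\lambda_{\Psi\circ\Phi}$, so $\lambda\big((\Psi\circ\Phi)^{-1}(\{B\})\big)=\mu_\Psi(\{B\})>0$; as the $2$-random sequences have $\lambda$-measure one, this preimage contains a $2$-random $Y$ with $(\Psi\circ\Phi)(Y)=B$. (Alternatively one can recover such a $Y$ by applying no randomness from nonrandomness, Theorem~\ref{thm-pres}(ii), to $\Psi$ and then to $\Phi$, exactly as in Proposition~\ref{prop-pj1}.) Since $\Phi(Y)$ agrees with $Y'$ off a finite set and $\Psi$ preserves finiteness of differences, $B=(\Psi\circ\Phi)(Y)$ agrees with $\Psi(Y')=Y\oplus W_e^Y$ off a finite set; in particular the even part of $B$ agrees with $Y$ off a finite set. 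Hence $Y\equiv_T(\text{even part of }B)\leq_T B\leq_T\jump$, so $Y$ is $\Delta^0_2$, contradicting that $Y$ is $2$-random. Therefore $\mu_\Psi$ has no atoms.

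I expect the main obstacle to be the bookkeeping in the last step: making precise that the composite $\Psi\circ\Phi$ only perturbs $Y\oplus W_e^Y$ at finitely many places (which is why I need the coding maps underlying $\Psi$ to be injective, so that a finite change in $\Phi(Y)$ propagates to only finitely many output bits) and that this suffices to read $Y$, up to finite modification, off $B$. The conceptual point worth flagging is exactly why no hypothesis on $W_e$ is needed here whereas Proposition~\ref{prop-pj1} required $W_e^Y\not\leq_T\jump$: the pseudojump $X\oplus W_e^X$ always computes $X$ itself, so an atom would force a $2$-random sequence to lie below $\jump$, which is already absurd, regardless of the behavior of $W_e^X$ alone.
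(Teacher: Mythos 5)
Your proof is correct, and its first half coincides with the paper's: both route through Theorem \ref{thm-jump}, take the total functional $\Psi$ induced by the $1$-reductions $X\leq_1 X'$ and $W_e^X\leq_1 X'$, and apply randomness preservation (Theorem \ref{thm-pres}(i)) to conclude that $X\oplus W_e^X=\Psi(X')$ is random with respect to the $\jump$-computable pushforward $\mu_\Psi$. The genuine difference is the continuity step. The paper dispatches it in one line: $\mu_\Psi$ is continuous ``as $\mu$ is continuous and $\Psi$ is one-to-one.'' You instead analyze a putative atom $B$: by Lemma \ref{lem-atom} it satisfies $B\leq_T\jump$; writing $\mu_\Psi=\lambda_{\Psi\circ\Phi}$ and using that the $2$-randoms have Lebesgue measure one (or Theorem \ref{thm-pres}(ii) applied twice), you produce a $2$-random $Y$ with $(\Psi\circ\Phi)(Y)=B$; since $\Phi(Y)$ agrees with $Y'$ off a finite set and $\Psi$, having injective coding functions, propagates finite differences to finite differences, the even part of $B$ is a finite modification of $Y$, giving $Y\leq_T B\leq_T\jump$, a contradiction. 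This extra work buys actual rigor: the bit-copying functional $\Psi$ is \emph{not} literally injective on $\cs$ (two inputs agreeing on the ranges of the coding functions but differing elsewhere have the same image), and the measure $\mu=\lambda_\Phi$ is not supported on the set of genuine jumps, which is where injectivity does hold; so the paper's one-liner implicitly requires exactly the argument you give, namely that $(\Psi\circ\Phi)$-preimages of singletons meet the $2$-randoms only in a set of finite modifications of a single sequence, hence are $\lambda$-null. Your closing remark --- that no hypothesis on $W_e$ is needed because the pseudojump's output encodes its input --- is precisely the conceptual content of the paper's injectivity claim, made precise.
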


\begin{proof}
As in the proof of Proposition \ref{prop-pj1}, there is a total $\emptyset'$-functional $\Psi$ that maps each 2-random sequence $Y\in\cs$ to $Y\oplus W_e^Y$.   Let $\mu$ be a $\jump$-computable measure such that $X'$ is $\emptyset'$-Martin-L\"of random with respect to $\mu$. Then as we argued in the proof of Proposition \ref{prop-pj1}, given a 2-random sequence $X\in\cs$, $X\oplus W_e^X$ is $\emptyset'$-Martin-L\"of random with respect to the induced measure $\mu_\Psi$, which is $\jump$-computable.  Lastly, $\mu_\Psi$ is continuous, as $\mu$ is continuous and $\Psi$ is one-to-one.
\end{proof}

Finally, we consider pseudojump inversion.  The pseudojump inversion theorem is as follows:

\begin{theorem}[Jockusch/Shore \cite{JocSho83}]
Let $e\in\omega$.  For every $A\geq_T\jump$, there is some $B\in\cs$ such that $B\oplus W_e^B\equiv_T A$.  
\end{theorem}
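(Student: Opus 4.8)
The plan is to construct $B$ by a finite-extension argument carried out relative to $A$, building $B=\bigcup_s\sigma_s$ as the union of an increasing sequence of strings $\sigma_0\preceq\sigma_1\preceq\cdots$ in $\str$. The construction will simultaneously \emph{force} the c.e.\ operator $W_e^B$ to be $A$-computable and \emph{code} $A$ into $B$ one bit at a time, so that in the end $B\oplus W_e^B\equiv_T A$. Both halves of the equivalence are then read off the single construction.

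At stage $s$, given $\sigma_s$, I would first settle the membership of $s$ in $W_e^B$. The question ``does there exist $\tau\succeq\sigma_s$ with $s\in W_e^\tau$?'' (where $W_e^\tau$ is the use-bounded approximation obtained by running the $e$-th machine on input $s$ with oracle queries only below $|\tau|$) is $\Sigma^0_1$, hence decidable by $\jump$ and \emph{a fortiori} by $A$, since $A\geq_T\jump$. If the answer is yes, $A$ can also produce the (length-lexicographically least) witness $\tau$, and I set $\sigma_s'=\tau$; by the use principle this permanently forces $s\in W_e^B$. If the answer is no, I set $\sigma_s'=\sigma_s$, and since $B\succeq\sigma_s$ this permanently keeps $s$ out of $W_e^B$. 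I then append one coding bit, setting $\sigma_{s+1}=\sigma_s'{}^\frown A(s)$.

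Two verifications remain. For the upper bound, the entire construction is $A$-computable (each step queries $\jump\leq_T A$ and reads a bit of $A$), so $B\leq_T A$; moreover $s\in W_e^B$ iff the stage-$s$ answer was yes, which $A$ decides, so $W_e^B\leq_T A$, giving $B\oplus W_e^B\leq_T A$. For the lower bound, I would recover $A$ from $B\oplus W_e^B$ by replaying the construction with this oracle. Having located $\sigma_s$ as an initial segment of $B$ of known length, I consult the $W_e^B$ component to learn whether $s\in W_e^B$: if not, then $\sigma_s'=\sigma_s$; if so, I search for the least witness $\tau$ (a search whose termination is guaranteed precisely because $s\in W_e^B$), which by construction is an initial segment of $B$, and set $\sigma_s'=\tau$. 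The bit of $B$ immediately following $\sigma_s'$ is then $A(s)$, and I advance the marker. This shows $A\leq_T B\oplus W_e^B$, completing $B\oplus W_e^B\equiv_T A$.

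I expect the main obstacle to be the bookkeeping in the recovery step: I must ensure that from $B\oplus W_e^B$ alone one can reconstruct the exact position of each coding bit, which hinges on the witness $\tau$ chosen during recovery coinciding with the one chosen during construction. Fixing a single canonical (e.g.\ length-lexicographically least, use-bounded) witness and tracking the marker lengths $|\sigma_s|$ explicitly throughout both passes resolves this, so that the ``replay'' is deterministic and faithful; the remaining details are routine.
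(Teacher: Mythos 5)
Your construction is exactly the paper's (following Jockusch--Shore): alternate $\jump$-decidable forcing of membership in $W_e^B$ via the length-lexicographically least witness with coding one bit of $A$ at a time, and your two verifications are precisely the standard ones the paper leaves to the citation. One small point of convention: for your ``replay'' search to terminate and to certifiably find the \emph{least} witness, take $s\in W_e^\tau$ to mean the computation halts with both use and running time bounded by $|\tau|$, so that it is a decidable predicate of $(\tau,s)$; with that standard convention your argument goes through as written.
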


Here we consider pseudojump inversion as defining an effective operator.  One can observe that the proof of the pseudojump inversion theorem (see \cite[Theorem 2.1]{JocSho83}) gives, for each $e\in\omega$, a total $\jump$-computable functional $\Xi$ such that for every $A\geq_T\jump$, setting $\Xi(A)=B$, we have $B\oplus W_e^B\equiv_T A$.  Let us review the details, which are relevant for our discussion.

Fix $e\in\omega$.  Given $A\in\cs$, we define a sequence of strings $(\tau_i)_{i\in\omega}$ such that $\tau_i\preceq\tau_{i+1}$ for all $i\in\omega$.  First we set $\tau_0=\epsilon$.  For each $i\in\omega$, given $\tau_{2i}$, we use $\jump$ to determine whether there is some $\tau\succeq\tau_{2i}$ such that $i\in W_e^\tau$.  If so, we let $\tau_{2i+1}$ be the length-lexicographically least such $\tau$; otherwise, we set $\tau_{2i+1}=\tau_{2i}$.  We then set $\tau_{2i+2}={\tau_{2i+1}}^\frown A(i)$.  Then $B=\bigcup_{i\in\omega}\tau_i$ is the desired sequence.  Based on this construction, we can prove:

\begin{theorem}
For every $e\in\omega$ and every 2-random sequence $X$, if $\Xi$ inverts the pseudojump operator with index $e$, then $\Xi(X')$ is Martin-L\"of random with respect to a continuous, $\jump$-computable measure.
\end{theorem}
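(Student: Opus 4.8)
The plan is to follow the template of Proposition \ref{prop-pj2} almost verbatim, with continuity of the induced measure again coming from injectivity of the relevant operator rather than from an atom analysis. First I would invoke Theorem \ref{thm-jump} to fix a continuous, $\jump$-computable measure $\mu$ with $X' \in \MLR^{\jump}_\mu$. Since the discussion preceding the statement exhibits $\Xi$ as a \emph{total} $\jump$-computable functional, its domain is all of $\cs$, so $\mu(\dom(\Xi)) = 1$ and the pushforward $\mu_\Xi$ is a well-defined $\jump$-computable measure. Randomness preservation (Theorem \ref{thm-pres}(i), applied with $Z = \jump$) then immediately gives $\Xi(X') \in \MLR^{\jump}_{\mu_\Xi}$. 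As explained in Section \ref{sec-intro}, verifying $\MLR$ relative to $\jump$ suffices to witness randomness with respect to a $\jump$-computable measure, so all that remains is to show that $\mu_\Xi$ is continuous.

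For continuity, the key observation---and essentially the only content beyond bookkeeping---is that $\Xi$ is injective on all of $\cs$. This follows directly from the construction recalled before the statement: the bits of the input $A$ are coded into the output $B = \Xi(A)$ at the even stages via $\tau_{2i+2} = {\tau_{2i+1}}^\frown A(i)$, while the interleaved odd stages depend only on $\tau_{2i}$, the index $e$, and $\jump$, not on $A$ beyond the prefix already fixed. Concretely, if $A \neq A'$ and $i$ is least with $A(i) \neq A'(i)$, then the strings $\tau_0, \dots, \tau_{2i+1}$ produced by the two runs coincide (they depend only on $A \uh i = A' \uh i$), whereas $\tau_{2i+2}$ and its counterpart differ in their final bit; since $\Xi(A) \succeq \tau_{2i+2}$ and $\Xi(A')$ extends the differing string, we conclude $\Xi(A) \neq \Xi(A')$.

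Given injectivity, continuity is immediate: for any $A \in \cs$ the preimage $\Xi^{-1}(\{A\})$ is either empty or a singleton, so $\mu_\Xi(\{A\}) = \mu(\Xi^{-1}(\{A\})) = 0$ because $\mu$ is continuous. Hence $\mu_\Xi$ has no atoms, and $\Xi(X')$ is Martin-L\"of random with respect to the continuous, $\jump$-computable measure $\mu_\Xi$, as desired.

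I expect the main (and essentially only) obstacle to be isolating the injectivity of $\Xi$ and confirming that it holds for \emph{arbitrary} inputs in $\cs$, not merely for those $A \geq_T \jump$ for which the inversion identity $B \oplus W_e^B \equiv_T A$ is guaranteed. This matters because randomness preservation forces me to treat $\Xi$ as a total functional on Cantor space under the Lebesgue pushforward, rather than only along the orbit of genuine jumps; fortunately the coding of the input bits is syntactic and makes injectivity transparent regardless of the oracle strength of $A$. I would emphasize that, unlike Proposition \ref{prop-pj1} and Theorem \ref{thm-jump}, no appeal to Sacks' theorem, Lemma \ref{lem-atom}, or \emph{no randomness from nonrandomness} is needed here: injectivity short-circuits the atom analysis entirely, exactly as in Proposition \ref{prop-pj2}.
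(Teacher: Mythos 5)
Your proposal is correct and follows essentially the same route as the paper: invoke Theorem \ref{thm-jump} for a continuous, $\jump$-computable $\mu$, push forward via the total $\jump$-computable functional $\Xi$ using randomness preservation, and derive continuity of $\mu_\Xi$ from injectivity of $\Xi$ on all of $\cs$ (the paper proves injectivity by induction on the stages $\tau_i$ assuming equal outputs, while you argue contrapositively from the least bit of disagreement---the same observation in a different wrapper). One trivial slip: in your final paragraph the measure being pushed forward is $\mu$, not Lebesgue measure, but this does not affect the argument.
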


\begin{proof}
Clearly the operator $\Xi$ as described above is total and $\jump$-computable.  For a 2-random $X\in\cs$, let $\mu$ be a $\jump$-computable measure such that $X'$ is $\emptyset'$-Martin-L\"of random with respect to $\mu$. Applying $\Xi$ to the jump of a 2-random sequence yields a sequence that is $\emptyset'$-Martin-L\"of random with respect to the induced $\emptyset'$-computable measure $\mu_{\Xi}$

To verify that $\mu_{\Xi}$ is continuous, we claim that $\Xi$ is one-to-one.  For $A_0,A_1\in\cs$, suppose that $\Xi(A_0)=\Xi(A_1)$.  For $j\in\{0,1\}$, let $(\tau^j_i)_{i\in\omega}$ be the sequence of strings from the above construction when applied to $A_j$.  We show by induction that $\tau^0_i=\tau^1_i$ for every $i\in\omega$, from which it follows that $A_0=A_1$.

\begin{itemize}
\item Base case:  $\tau^0_0=\epsilon=\tau^1_0$.

\item Inductive step:  For $k\in\omega$, suppose that $\tau^0_k=\tau^1_k$.  We have two cases to consider:

\medskip

\noindent Case 1:  $k=2n$ for some $n\in\omega$.  Then for $j\in\{0,1\}$, either $\tau^j_{2n+1}$ is the length-lexicographically least $\tau\succeq \tau^0_k$ such that $n\in W_e^\tau$ or $\tau^j_{2n+1}=\tau^j_{2n}$.  Either way, we have $\tau^0_{k+1}=\tau^1_{k+1}$.

\medskip

\noindent Case 2:  $k=2n+1$ for some $n\in\omega$.  Then under the assumption that $\Xi(A_0)=\Xi(A_1)$ and $\tau^0_k=\tau^1_k$, it follows that that $\tau^0_{k+1}={\tau^0_k}^\frown A_0(n)={\tau^1_k}^\frown A_1(n)=\tau^1_{k+1}$.
\end{itemize}
It follows by induction that $\tau^0_k=\tau^1_k$ for all $k\in\omega$, and hence that $A_0=A_1$.  As $\Xi$ is one-to-one, it follows that $\mu_\Xi$ is continuous.

\end{proof}

Note that, for $e\in\omega$ and $X\in\cs$, in the case that $X\oplus W_e^X$ is proper, it follows that $W_e^X$ is also proper (since $W_e^X\leq_{tt} X\oplus W_e^X$).  Thus, we can broaden our analysis of sequences that are not proper but are random with respect to a continuous, $\emptyset'$-computable measure by answering the following questions.

\begin{question}
For which $e\in\omega$ do we have that $W_e^X$ is not proper for any 2-random sequence $X$?
\end{question}

\begin{question}
For which pseudojump operators is it the case that the sequence obtained by pseudojump inversion applied to the jump of a 2-random sequence yields a nonproper sequence?
\end{question}

\section*{Acknowledgements} The research in this article was supported by 
NSA Mathematical Sciences Program Young Investigator Grant  \# H98230-16-1-0310.
Thank you to Laurent Bienvenu for early conversations that led to the main idea that motivated this study, and to the anonymous reviewers for helpful feedback that improved the presentation of the paper.

%
%
%
%
%
%
%

%
%
%
%
%
\bibliographystyle{alpha} \bibliography{continuous}

\end{document}